\DeclareMathOperator{\im}{im}
\DeclareMathOperator{\coker}{coker}
\DeclareMathOperator{\Pic}{Pic}
\DeclareMathOperator{\Spec}{Spec}
\DeclareMathOperator{\fieldchar}{char}
\DeclareMathOperator{\id}{id}
\DeclareMathOperator{\Br}{Br}
\DeclareMathOperator{\coh}{H}
\DeclareMathOperator{\Hom}{Hom}
\DeclareMathOperator{\Aut}{Aut}
\DeclareMathOperator{\lcm}{lcm}
\newtheorem{theorem}{Theorem}[section]
\newtheorem{lemma}[theorem]{Lemma}
\newtheorem{proposition}[theorem]{Proposition}
\newtheorem{corollary}[theorem]{Corollary}
\newtheorem*{theorem*}{Theorem}
\newtheorem*{corollary*}{Corollary}
\theoremstyle{definition}
\newtheorem{definition}[theorem]{Definition}
\newtheorem{note}[theorem]{Note}
\newtheorem{convention}[theorem]{Convention}
\newtheorem{example}[theorem]{Example}
\title{Tame Nodal Stacky Curves}
\author{Martin Bishop and William C. Newman}
\date{}
\begin{document}
\begin{abstract}
In this paper we analyze the properties of tame nodal stacky curves, which include
twisted curves and \textit{doubly-twisted} curves.
Our main results are a complete classification of the possible structures of a tame
stacky node, along with
computations of the Picard and Brauer groups of nodal stacky curves.
\end{abstract}

\maketitle

\section{Introduction}
The geometry of stacky curves is controlled in large part by their Picard and Brauer groups.
The Brauer group of a stacky curve is becoming well understood through the recent work
of Achenjang \cite{Ach24} and Bishop \cite{Bis25}, and the Picard group of a \textit{smooth} stacky
curve was computed by Lopez \cite{Lop23}.
This paper aims to understand
how the presence of stacky structure at nodes affects the Picard group.
We will also finish a Brauer group computation which was promised in \cite[Example 3.9]{Bis25}.

This paper gives an explicit description of the Picard and Brauer groups for tame nodal stacky curves,
covering the two possible local structures at a node: \textit{twisted} nodes, where the geometric stabilizer
is cyclic and preserves the branches; and \textit{doubly-twisted}, where there is a cyclic subgroup acting
on each branch as well as a $\mathbb Z/2$ swapping branches.

\subsection{Main results}
We begin by stating a version of Theorem 1.1 of \cite{Lop23}.
\begin{theorem*}[\cite{Lop23}, Theorem 1.1]
    Let $\mathcal C$ be a stacky curve with a smooth point $p\in\mathcal C$ with stabilizer group $\mu_n$. Let $C$ be the coarsening
of $\mathcal C$ at $p$. Then we have
\[\Pic(\mathcal C)=\Pic(C)\langle \sqrt[n]{\mathcal O(p)}\rangle.\]
\end{theorem*}
Here, by $\Pic(C)\langle \sqrt[n]{\mathcal O(p)}\rangle$, we mean
$$
\frac{\Pic(C)\oplus \mathbb Z}{\langle(\mathcal O(p),0)-n(0,1)\rangle},
$$
and we identify $p\in\mathcal C$ with the corresponding point in $C$.

At a stacky node, whether twisted or doubly-twisted, there is an action of $\mu_n$, which is the largest subgroup of the stabilizer group that fixes the branches. The action can be written locally as $\zeta_n\cdot (x,y)\mapsto (\zeta_n x,\zeta_n^a y)$ for $\zeta_n$ a primitive root of unity. Our computation of the Picard group depends on these numbers $a$ and $n$. 

\begin{theorem*}[Theorem~\ref{twisted Picard v1}]
Let $\mathcal C$ be a stacky curve.
Assume that $\mathcal C$ has a twisted node $p$ and denote its partial coarsening
at $p$ by $C$. Let $\tilde{\mathcal C}$
be the partial normalization of $\mathcal C$ at $p$.
Let $\tilde C$ be the coarsening
of $\tilde{\mathcal C}$ at the preimages of $p$.
If $p$ is nonseparating, we have an exact sequence
$$
0\rightarrow\mathbbm k^{\times}\rightarrow\Pic\mathcal C\rightarrow \Pic\tilde C\left<\sqrt[n]{\mathcal O(p_1+ap_2)}\right>\rightarrow 0.
$$
If $p$ is separating, we have
\[\Pic(\mathcal C)=\Pic\tilde C\left<\sqrt[n]{\mathcal O(p_1+ap_2)}\right>.\]
\end{theorem*}

The sequence for nonseparating nodes does not always split, though it will if $\mathbbm k=\bar{\mathbbm k}.$ An important special case is when the node is balanced.

\begin{corollary*}[Corollary \ref{balanced twisted Picard}]
In the above setting, assume also that $\mathbbm k$ is algebraically closed, and that the node $p$ is \textit{balanced}, that is, $a\equiv-1\mod n$. If $p$ is nonseparating, then
$$
\Pic\mathcal C\cong\mathbbm k^{\times}\oplus\Pic\tilde{C}\oplus\mathbb Z/n\cong\Pic C\oplus\mathbb Z/n,
$$
and if $p$ is separating
\[\Pic\mathcal C\cong \Pic\tilde C\oplus \mathbb Z/n= \Pic C\oplus \mathbb Z/n.\]
\end{corollary*}

In Section \ref{doubly-twisted section}, we address what happens at \textit{doubly-twisted} nodes. For the exact statement of the following theorem, see Theorem \ref{doubly-twisted Picard}.
\begin{theorem*}[Theorem \ref{doubly-twisted Picard}]
Let $\mathcal C$ be a stacky curve.
Assume that $\mathcal C$ has a doubly-twisted node $p$.
Let $C$ be its coarsening at $p$.
Then either
\[\Pic(\mathcal C)= \mu_2\oplus \Pic(C)\left\langle\sqrt[d_-]{\mathcal O(p)}\right\rangle\]
or
\[\Pic(\mathcal C)=  \Pic(C)\langle\sqrt[2d_-]{\mathcal O(2p)}\rangle\]
depending on the structure of the stabilizer.
\end{theorem*}

Lastly, we also compute the contribution of a node to the Brauer group when
the base field is algebraically closed. See Proposition \ref{H2 doubly twisted}
for the exact statement.
\begin{theorem*}[Propositions \ref{H2 doubly twisted} and \ref{H2 twisted}]
Let $\mathcal C$ be a stacky curve over an algebraically closed field $\mathbbm k$.
The contribution of a twisted node to the Brauer group of $\mathcal C,$
$\coh^2(G,\mathbbm k^{\times}_{\text{triv}}),$ is $0$.
The contribution of a doubly-twisted node, $\coh^2(G,\mathbbm k^{\times}_{\text{triv}}),$ is either $0$ or $\mathbb Z/2$, depending on the structure of the stabilizer.
\end{theorem*}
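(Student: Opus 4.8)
The plan is to compute the group cohomology $\coh^2(G,\mathbbm k^{\times}_{\text{triv}})$ directly in each case. For a twisted node the stabilizer $G$ is cyclic, say $G=\mathbb Z/n$, and since the coefficients are trivial the $2$-periodicity of cyclic cohomology gives $\coh^2(\mathbb Z/n,\mathbbm k^{\times})\cong\mathbbm k^{\times}/(\mathbbm k^{\times})^n$. As $\mathbbm k$ is algebraically closed, $\mathbbm k^{\times}$ is divisible and this quotient vanishes; this disposes of the twisted case in one line. For a doubly-twisted node the compatibility of the branch-swapping $\mathbb Z/2$ with the diagonal $\mu_n$-action forces $a^2\equiv1\pmod n$, and $G$ sits in an extension $1\to\mathbb Z/n\to G\to\mathbb Z/2\to1$ in which the generator of $\mathbb Z/2$ acts on $\mathbb Z/n$ by multiplication by $a$. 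Tameness forces $\fieldchar\mathbbm k\neq2$, so $\mu_2(\mathbbm k)=\mathbb Z/2$.

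I would then run the Lyndon--Hochschild--Serre spectral sequence $E_2^{p,q}=\coh^p(\mathbb Z/2,\coh^q(\mathbb Z/n,\mathbbm k^{\times}))\Rightarrow\coh^{p+q}(G,\mathbbm k^{\times})$. The three terms contributing to total degree $2$ are $E_2^{2,0}=\coh^2(\mathbb Z/2,\mathbbm k^{\times})=0$ (divisibility again), $E_2^{0,2}=\coh^0(\mathbb Z/2,\coh^2(\mathbb Z/n,\mathbbm k^{\times}))=0$ (by the twisted computation), and $E_2^{1,1}=\coh^1(\mathbb Z/2,\coh^1(\mathbb Z/n,\mathbbm k^{\times}))$. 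Here $\coh^1(\mathbb Z/n,\mathbbm k^{\times})=\Hom(\mathbb Z/n,\mathbbm k^{\times})\cong\mathbb Z/n$, with $\mathbb Z/2$ acting by multiplication by $a$. Writing $\ker(a+1)$ and $\im(a-1)$ for the kernel and image of multiplication by $a+1$ and $a-1$ on $\mathbb Z/n$, the relation $(a+1)(a-1)=a^2-1\equiv0$ gives $\im(a-1)\subseteq\ker(a+1)$, so $E_2^{1,1}=\ker(a+1)/\im(a-1)$ has order $d_+/(n/d_-)=d_-d_+/n\in\{1,2\}$. Since the other two contributing terms vanish and the only possibly nonzero differential touching $E_2^{1,1}$ is $d_2\colon E_2^{1,1}\to E_2^{3,0}$, the abutment reduces to $\coh^2(G,\mathbbm k^{\times})=\ker\bigl(d_2\colon E_2^{1,1}\to E_2^{3,0}\bigr)$, where $E_2^{3,0}=\coh^3(\mathbb Z/2,\mathbbm k^{\times})=\mu_2(\mathbbm k)=\mathbb Z/2$. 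In particular, when $d_-d_+=n$ we have $E_2^{1,1}=0$ and the group vanishes, independently of splitting.

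It remains to analyze the single differential $d_2\colon E_2^{1,1}\cong\mathbb Z/2\to E_2^{3,0}\cong\mathbb Z/2$ when $d_-d_+=2n$. If $G$ is split, the section $\mathbb Z/2\hookrightarrow G$ makes inflation $\coh^{\bullet}(\mathbb Z/2,\mathbbm k^{\times})\to\coh^{\bullet}(G,\mathbbm k^{\times})$ a split injection; the edge map then forces $E_\infty^{3,0}=E_2^{3,0}$, so every differential into the bottom row vanishes, in particular $d_2=0$, and $\coh^2(G,\mathbbm k^{\times})=E_2^{1,1}=\mathbb Z/2$. If $G$ is non-split I would identify $d_2$ with the transgression, which up to sign is cup product with the extension class $\epsilon\in\coh^2(\mathbb Z/2,\mathbb Z/n)$ along the evaluation pairing $\Hom(\mathbb Z/n,\mathbbm k^{\times})\otimes\mathbb Z/n\to\mathbbm k^{\times}$; here $\epsilon\neq0$ precisely because $G$ is non-split, and $\coh^2(\mathbb Z/2,\mathbb Z/n)$ also has order $d_-d_+/n=2$, so $\epsilon$ is the unique nonzero class.

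The main obstacle is to show this cup product is nonzero, i.e.\ that $d_2$ is an isomorphism in the non-split case. The cleanest route I see is to restrict to a $2$-Sylow subgroup $P\subseteq G$: since $[G:P]$ is odd, corestriction--restriction is multiplication by an odd number, so restriction is injective on the ($2$-primary) group $\coh^2(G,\mathbbm k^{\times})$. The hypothesis $d_-d_+=2n$ forces $a\equiv\pm1$ modulo the $2$-part of $n$, and one checks that the extension class of $G$ is detected on $P$; thus in the non-split case $P$ is cyclic (when $a\equiv1$) or generalized quaternion (when $a\equiv-1$), while in the split case $P$ is $\mathbb Z/2^k\times\mathbb Z/2$ or dihedral. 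Since $\coh^2(-,\mathbbm k^{\times})$ agrees here with the (dual) Schur multiplier, and cyclic and generalized quaternion groups have trivial Schur multiplier, the restriction target vanishes and hence $\coh^2(G,\mathbbm k^{\times})=0$. Assembling the three regimes gives exactly the stated dichotomy. I expect the delicate points to be the bookkeeping that identifies the isomorphism type of $P$ from $a\bmod 2^k$, the verification that splitness of $G$ is faithfully reflected by $P$, and the appeal to the vanishing of the relevant multipliers.
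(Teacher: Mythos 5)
Your proposal is correct, and up to the decisive step it coincides with the paper's proof: the twisted case is the same one-line divisibility argument, and for the doubly-twisted case you set up the same Lyndon--Hochschild--Serre spectral sequence, kill $E_2^{2,0}$ and $E_2^{0,2}$ the same way, compute $E_2^{1,1}=\ker(a+1)/\im(a-1)$ of order $d_-d_+/n$, and reduce everything to $\coh^2(G,\mathbbm k^{\times})=\ker(d_2^{1,1}\colon E_2^{1,1}\to E_2^{3,0})$, exactly as in Lemmas \ref{kernel} and \ref{E_2^{1,1}}. Where you genuinely diverge is in evaluating $d_2^{1,1}$. The paper invokes the explicit Hochschild--Serre formula \cite[Theorem 4]{HS53}: for the generator $\varphi\colon\zeta_n\mapsto\zeta_n^{n/d_+}$ one has $d_2^{1,1}(\varphi)=\varphi(\zeta_n^{-m})=\zeta_n^{-mn/d_+}$, which vanishes if and only if $d_+\mid m$, i.e.\ (Proposition \ref{m prop}) if and only if $G$ is split; this settles both cases at once. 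You instead handle the split case by the section/inflation argument (inflation is split injective, so nothing can hit the bottom row and $d_2^{1,1}=0$), and the non-split case by restricting to a $2$-Sylow subgroup $P$, using that restriction is injective on the $2$-primary group $\coh^2(G,\mathbbm k^{\times})\subseteq E_2^{1,1}$. The points you flag as delicate do all go through: comparing $2$-adic valuations of $\gcd(a\pm1,n)$ shows $d_-d_+=2n$ forces $a\equiv\pm1$ modulo $2^{v_2(n)}$; splitness of $G$ is detected by $P$ because $\coh^2(\mathbb Z/2,\mu_{n'})=0$ for the odd part $n'$ of $n$, so the extension class of $G$ lives entirely in $\coh^2(\mathbb Z/2,\mu_{2^{v_2(n)}})$, which is the class of $P\cong G/\mu_{n'}$; and the non-split $P$ is then cyclic (when $a\equiv1$) or generalized quaternion (when $a\equiv-1$), both with trivial Schur multiplier, so $\coh^2(P,\mathbbm k^{\times})=0$. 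In terms of trade-offs: the paper's route is shorter, uniform over both cases, and pinpoints the obstructing class concretely in terms of $m$, at the price of citing the explicit $d_2$ formula for group extensions; your route avoids that citation and uses only standard Sylow/transfer technology plus classical Schur-multiplier facts, but needs the extra number theory, the Sylow-detection lemma, and (a point worth making explicit) the identification $\coh^2(P,\mathbbm k^{\times})\cong H_2(P,\mathbb Z)$, which relies on tameness ($\fieldchar\mathbbm k\neq2$) so that no $2$-primary information is lost in $\mathbbm k^{\times}$.
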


These formulas track precisely how the introduction of stackiness alters the behavior of the classical
exact sequence
$$
0\rightarrow\mathbbm k^{\times}\rightarrow\Pic C\rightarrow\Pic\tilde C\rightarrow0
$$
where $C$ is a curve with a nonseparating node $p$ and $\tilde C$ is the partial normalization at $p$.
They also contrast with the case of a smooth stacky curve. In \cite{Lop23}, Lopez shows that
a smooth stacky curve is always obtained as a root stack from its coarse moduli space. This
provides a uniform formula for computing Picard groups: it is always the pushout described in
\cite[Theorem 1.1]{Lop23}. Whereas our Theorems \ref{doubly-twisted Picard} and \ref{twisted Picard v1}
show that the Picard groups of nodal stacky curves,
while captured in concise formulas, vary depending on the stack
structure at the node and cannot be handled as uniformly. In particular, our results agree with
the established fact that stacky nodes are not root stacks.

\subsection{Conventions}\label{conventions}
We work over a field $\mathbbm k$, not assumed to be algebraically closed except in
the context of Brauer groups.

We take a \textit{stacky curve} to be a tame, proper, geometrically connected, one-dimensional Deligne-Mumford
stack with trivial generic stabilizer of finite type over $\mathbbm k$.
We assume that all of our nodes are \textit{split}, meaning that
each node as well as their preimages in the normalization are $\mathbbm k$-points.
Note that the tameness hypothesis means that $\fieldchar\mathbbm k$ is always coprime
to $n$ and is coprime to 2 when discussing doubly-twisted nodes.

\subsection{Acknowledgments}
The first author would like to thank Giovanni Inchiostro, Cherry Ng, Brian Nugent,
and Alex Schefflin for their helpful conversations.

The authors would like to thank Jarod Alper and Yuchen Liu,
who helped diagnose a persistent error with an early version of
Theorem \ref{twisted Picard v1}, as well as Dan Abramovich, who pointed
them to \cite{Eke95}.

The authors met at the 2023 AGNES Summer School on Intersection Theory on Moduli Spaces. The authors thank the organizers and speakers.

The second author was supported by the National Science Foundation under Grant No. DMS-2231565.

\section{Background}
\subsection{Assorted background}
We recall Tsen's Theorem for curves, as well as a stacky version.
\begin{theorem}[Tsen's Theorem]\label{Tsen}
Suppose that $C$ is a curve over an algebraically closed field. Then
$\coh^2(C,\mathbb G_m)=0$.
\end{theorem}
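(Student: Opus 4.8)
The plan is to reduce the geometric vanishing statement to the classical field-theoretic form of Tsen's theorem and then prove the latter by a parameter count. Write $K=\mathbbm k(C)$ for the function field, where $\mathbbm k$ is our algebraically closed base. Since $C$ is a regular integral curve, the restriction map to the generic point
$$
\coh^2(C,\mathbb G_m)\longrightarrow \coh^2(\Spec K,\mathbb G_m)=\Br K
$$
is injective; this is Grothendieck's result that the cohomological Brauer group of a regular integral scheme injects into the Brauer group of its function field, resting on the regularity of $C$ and purity. (For singular $C$ one reaches the same conclusion by comparing $\coh^2(C,\mathbb G_m)$ with the normalization through the exact sequence relating their unit sheaves, but the regular case below is the essential one.) It therefore suffices to prove $\Br K=0$.

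The first main step is to show that $K$ is a $C_1$ field, i.e.\ that every homogeneous form of degree $d$ in $n>d$ variables over $K$ has a nontrivial zero. Because $C$ is a curve over $\mathbbm k$, the field $K$ is a finite extension of a rational function field $\mathbbm k(t)$, and algebraic extensions of $C_1$ fields are again $C_1$; so I would reduce to $K=\mathbbm k(t)$. For a form $f$ over $\mathbbm k(t)$ I clear denominators so that its coefficients lie in $\mathbbm k[t]$, with degrees bounded by some $\delta$. I then make the ansatz $x_i=\sum_{j=0}^{N}c_{ij}t^j$ with indeterminate coefficients $c_{ij}\in\mathbbm k$. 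Substituting, $f(x_1,\dots,x_n)$ becomes a polynomial in $t$ of degree at most $dN+\delta$, so vanishing of $f$ amounts to $dN+\delta+1$ equations in the $n(N+1)$ unknowns $c_{ij}$, and these equations are homogeneous of degree $d$ in the $c_{ij}$ because $f$ is homogeneous of degree $d$ and each $x_i$ is linear in the $c_{ij}$. Since $n>d$, for $N$ large we have $n(N+1)>dN+\delta+1$, so the projective variety cut out by these homogeneous equations has nonnegative dimension and hence a point over the algebraically closed $\mathbbm k$. Any such point is a nontrivial zero of $f$.

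The second main step is the standard deduction $\Br K=0$ from the $C_1$ property. Given a central division algebra $D/K$ of degree $m$, its reduced norm $\operatorname{Nrd}\colon D\to K$ is a homogeneous form of degree $m$ in the $m^2$ coordinates with respect to a $K$-basis. For $m>1$ we have $m^2>m$, so the $C_1$ property forces $\operatorname{Nrd}$ to have a nontrivial zero; but in a division algebra every nonzero element is invertible and thus has nonzero reduced norm, a contradiction. Hence $m=1$, every class in $\Br K$ is trivial, and combining with the injection above gives $\coh^2(C,\mathbb G_m)=0$.

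I expect the main obstacle to be the parameter count establishing the $C_1$ property: one must track the coefficient degree $\delta$ carefully and verify that the resulting system is genuinely homogeneous in the $c_{ij}$, so that the dimension bound produces an honest nonzero solution rather than only the trivial one. The reduction to $\mathbbm k(t)$ through finite extensions and the reduced-norm argument are formal by comparison, and the geometric injectivity I would simply cite from Grothendieck.
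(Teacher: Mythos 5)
Your proof is correct, but there is no proof in the paper to compare it with: Tsen's Theorem is recalled there as classical background, stated without proof or even a citation. Your argument is the standard classical one --- inject $\coh^2(C,\mathbb G_m)$ into $\Br\mathbbm k(C)$ by Grothendieck's theorem for regular integral schemes, establish the $C_1$ property of $\mathbbm k(t)$ by the parameter count, pass to finite extensions, and rule out nontrivial division algebras with the reduced norm --- and each step is sound. Three remarks on the details. First, your parenthetical treatment of singular $C$ is not optional in the context of this paper: the theorem is applied (through Theorem~\ref{Brauer group}) to coarse spaces of nodal stacky curves, which are nodal and hence not regular, so Grothendieck's injectivity cannot be invoked directly; the normalization sequence $0\to\mathcal O_C^{\times}\to\nu_*\mathcal O_{\tilde C}^{\times}\to Q\to 0$, with $Q$ a skyscraper supported at the singular points, reduces the claim to the smooth case, and is in fact the same device this paper uses for its sheaf $Q$ in Sections 4 and 5. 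Second, the claim that algebraic extensions of $C_1$ fields are again $C_1$ is a genuine theorem of Lang (proved via norm forms), not a formality, and deserves a citation; alternatively one can avoid it by running your parameter count directly over $K$, using Riemann--Roch to count sections of line bundles in place of polynomial degrees. Third, concluding $\coh^2(\Spec K,\mathbb G_m)=0$ from the vanishing of the division-algebra Brauer group uses the classical identification of $\coh^2(\mathrm{Gal}(\bar K/K),\bar K^{\times})$ with the Brauer group of the field $K$; that, too, is standard and citable. With those attributions in place, your proof is complete.
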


\begin{theorem}\label{Brauer group}
Let $\mathcal C$ be a stacky curve with trivial generic stabilizer over an algebraically closed field $K$.
Let $p_1,\dots,p_n$ be the singular points of $\mathcal C_{\text{red}}$ and $G_1,\dots,G_n$
be the geometric stabilizers at these points. Then
$$
\coh^2(\mathcal C,\mathbb G_m)=\bigoplus_{i=1}^n\coh^2(G_i,K^{\times})
$$
where the action of each $G_i$ on $K^{\times}$ is trivial.
\end{theorem}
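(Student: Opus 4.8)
The plan is to compare $\mathcal C$ with its coarse moduli space along the structure map $\pi\colon\mathcal C\to C$ and to extract $\coh^2(\mathcal C,\mathbb G_m)$ from the Leray spectral sequence
$$
E_2^{p,q}=\coh^p(C,R^q\pi_*\mathbb G_m)\Longrightarrow\coh^{p+q}(\mathcal C,\mathbb G_m).
$$
Because $\mathcal C$ has trivial generic stabilizer, $\pi$ restricts to an isomorphism away from the finitely many stacky points, so $\pi_*\mathbb G_m=\mathbb G_m$ while, for each $q\ge 1$, the sheaf $R^q\pi_*\mathbb G_m$ is a skyscraper supported on the stacky locus. Everything then hinges on two things: identifying the stalks of $R^q\pi_*\mathbb G_m$, and verifying that the low-degree terms and differentials of the spectral sequence behave as expected.

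For the stalks I would argue étale-locally, where a chart around a stacky point is a tame quotient $[\Spec\mathcal O/G]$ over the strictly henselian local ring $\mathcal O^G$ of the coarse space, with $G$ the geometric stabilizer. Since the small étale topos of a strictly henselian local ring is punctual, the Hochschild--Serre spectral sequence for the $G$-torsor $\Spec\mathcal O\to[\Spec\mathcal O/G]$ collapses and yields stalk $\coh^q(G,\mathcal O^\times)$. Now $\mathcal O^\times$ splits $G$-equivariantly as $K^\times\times(1+\mathfrak m)$, where $G$ fixes the residue field $K$ pointwise, so it acts trivially on $K^\times$; and by Hensel's lemma the principal units $1+\mathfrak m$ form a uniquely $|G|$-divisible group, because tameness makes $|G|$ invertible in $K$. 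Hence $\coh^{\ge 1}(G,1+\mathfrak m)=0$ and
$$
(R^q\pi_*\mathbb G_m)_p=\coh^q(G,\mathcal O^\times)=\coh^q(G,K^\times)\qquad(q\ge 1),
$$
with trivial $G$-action. This local computation, and in particular the use of tameness to discard the principal units, is the technical heart of the proof and the step I expect to be the main obstacle.

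It remains to run the spectral sequence in total degree $2$. The term $E_2^{2,0}=\coh^2(C,\mathbb G_m)$ vanishes by Theorem \ref{Tsen}, and $E_2^{1,1}=\coh^1(C,R^1\pi_*\mathbb G_m)$ and $E_2^{2,1}=\coh^2(C,R^1\pi_*\mathbb G_m)$ vanish because a skyscraper sheaf on the curve $C$ has no higher cohomology. Thus $\coh^2(\mathcal C,\mathbb G_m)=E_\infty^{0,2}$, the differential $d_2$ out of $E_2^{0,2}$ lands in $E_2^{2,1}=0$, and the only remaining concern is $d_3\colon E_3^{0,2}\to E_3^{3,0}=\coh^3(C,\mathbb G_m)$. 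Its source is a subquotient of $\bigoplus_p\coh^2(G_p,K^\times)$, which is torsion of order prime to $\fieldchar K$, whereas the Kummer sequence together with $\mathrm{cd}_\ell(C)\le 2$ shows $\coh^3(C,\mathbb G_m)$ has no prime-to-$\fieldchar K$ torsion; therefore $d_3=0$. Collecting terms,
$$
\coh^2(\mathcal C,\mathbb G_m)=\coh^0(C,R^2\pi_*\mathbb G_m)=\bigoplus_{p}\coh^2(G_p,K^\times),
$$
the sum ranging over all stacky points. Finally, a smooth stacky point of a curve has cyclic stabilizer, and $\coh^2(\mathbb Z/r,K^\times)=K^\times/(K^\times)^r=0$ since $K^\times$ is divisible; so only the singular points $p_1,\dots,p_n$ contribute, which is the asserted formula.
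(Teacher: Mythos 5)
Your proof is correct, but note that the paper does not actually prove this statement: its ``proof'' is the citation \cite[Proposition 3.7]{Bis25}, so the real comparison is with the toolkit the paper assembles in Section 2 precisely to support such arguments. Your argument is the natural reconstruction of that cited proof, and it uses the same skeleton: the Leray spectral sequence for the coarsening $\pi:\mathcal C\rightarrow C$, identification of the stalks of $\mathbf R^q\pi_*\mathbb G_m$ with $\coh^q(G_p,K^{\times})$ (trivial action), Tsen's theorem (Theorem \ref{Tsen}) to kill $E_2^{2,0}$, vanishing of higher cohomology of skyscrapers to kill $E_2^{1,1}$ and $E_2^{2,1}$, and cyclicity of stabilizers at smooth tame points so that only singular points contribute. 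Two places where you genuinely diverge are worth recording. First, instead of quoting Corollary \ref{Meier stalk} (via Proposition \ref{Meier structure}), you re-derive the stalk formula from the $G$-equivariant splitting $\mathcal O^{\times}\cong K^{\times}\times(1+\mathfrak m)$ together with unique $|G|$-divisibility of $1+\mathfrak m$; this is valid here because the relevant strict local rings sit at closed $K$-points of a curve over an algebraically closed field, hence contain their residue field, and uniqueness of roots in $1+\mathfrak m$ survives the fact that the local ring at a node is not a domain. It is a real simplification, but strictly less general than Corollary \ref{Meier stalk} and Lemma \ref{key lemma}, which must handle arbitrary geometric points where no such splitting exists; for the theorem at hand the shortcut is legitimate. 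Second, the low-degree-terms arguments the paper runs elsewhere (Sections 4--5) never compute $\coh^2(\mathcal C,\mathbb G_m)$ itself, so your explicit disposal of $d_3:E_3^{0,2}\rightarrow E_3^{3,0}$ --- the source is torsion of order prime to $\fieldchar K$ by tameness, while $\coh^3(C,\mathbb G_m)$ has no such torsion by the Kummer sequence and the bound $\mathrm{cd}_{\ell}(C)\leq 2$ --- is a necessary step that a naive five-term-sequence argument would miss, and you supply it correctly.
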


\begin{proof}
See \cite[Proposition 3.7]{Bis25}.
\end{proof}

Next, we describe the group cohomology of cyclic groups.
\begin{proposition}\label{group coho}
Let $G$ be a cyclic group of order $n$ and $M$ a $G$-module.
Let $x$ denote the generator of $G$. Let
$$
N=1+x+x^2+\dots+x^{n-1}\in\mathbb ZG.
$$
Then
$$
\coh^k(G,M)=
\begin{cases}
M^G/NM & \text{if $k\geq2$ even}\\
\ker(\cdot N)/(x-1)M & \text{if $k$ odd}.
\end{cases}
$$
In particular, if the action of $G$ on $M$ is trivial then
$\coh^k(G,M)=\Hom(G,M)$ if $k$ is odd and $\coh^k(G,M)=0$ if $k$ is even and multiplication by
$|G|$ is surjective on $M$.
\end{proposition}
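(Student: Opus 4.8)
The plan is to compute all of these groups from the standard $2$-periodic free resolution of the trivial $\mathbb ZG$-module $\mathbb Z$. Writing $P_k=\mathbb ZG$ for every $k\geq 0$, I would form the complex
$$
\cdots\xrightarrow{\ N\ }P_3\xrightarrow{\ x-1\ }P_2\xrightarrow{\ N\ }P_1\xrightarrow{\ x-1\ }P_0\xrightarrow{\ \epsilon\ }\mathbb Z\to 0,
$$
where $\epsilon$ is the augmentation, the boundary maps in odd degrees are multiplication by $x-1$, and those in even degrees are multiplication by $N$. The first task is to verify that this is a resolution. That consecutive maps compose to zero is immediate from the identities $xN=N$ (so $(x-1)N=0$) and $\epsilon(x-1)=0$. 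Exactness rests on two elementary facts about $\mathbb ZG$: an element $\sum a_ix^i$ is killed by $x-1$ precisely when all the $a_i$ coincide, i.e.\ when it is an integer multiple of $N$; and since $N\cdot\sum a_ix^i=\left(\sum a_i\right)N$, such an element is killed by $N$ precisely when $\sum a_i=0$, i.e.\ when it lies in the augmentation ideal $(x-1)\mathbb ZG$. Combined with the standard identification of $\ker\epsilon$ with the augmentation ideal, these give exactness at every spot of the complex.

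Next I would apply $\Hom_{\mathbb ZG}(-,M)$ to the deleted resolution and use the canonical isomorphism $\Hom_{\mathbb ZG}(\mathbb ZG,M)\cong M$, $f\mapsto f(1)$. Because each boundary is multiplication by a ring element $r$ and $f(r)=r\cdot f(1)$ for a module map $f$, each transpose map is again multiplication by $r$ on $M$. The cochain complex computing $\coh^\bullet(G,M)$ is therefore
$$
M\xrightarrow{\ x-1\ }M\xrightarrow{\ N\ }M\xrightarrow{\ x-1\ }M\xrightarrow{\ N\ }\cdots,
$$
with multiplication by $x-1$ in even cohomological degrees and by $N$ in odd ones. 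Taking cohomology then yields, for $k\geq 1$, the group $\ker(\cdot N)/(x-1)M$ when $k$ is odd, and $\ker\bigl(\cdot(x-1)\bigr)/NM=M^G/NM$ when $k$ is even, which is exactly the asserted formula.

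For the trivial-action case I would simply substitute the resulting actions: if $G$ acts trivially then $x$ acts as the identity, so $x-1$ acts as $0$ and $N$ acts as multiplication by $n=|G|$. In odd degree this gives $\coh^k(G,M)=\ker(\cdot n)=M[n]$, the $n$-torsion subgroup, which is canonically $\Hom(\mathbb Z/n,M)=\Hom(G,M)$; in even degree it gives $M/nM$, which vanishes exactly when multiplication by $|G|$ is surjective on $M$. The only genuinely nontrivial step is the exactness check for the periodic resolution in the first paragraph; once that resolution is in hand, the remaining identifications and the trivial-action specialization are purely formal.
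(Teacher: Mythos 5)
Your proposal is correct, and it is essentially the paper's approach: the paper proves this proposition by citation to standard references (Dummit--Foote, Chapter 17; Weibel, Chapter 6), and the argument in those references is exactly the $2$-periodic free resolution computation you carry out, including the same exactness checks in $\mathbb ZG$ and the same specialization $x-1\mapsto 0$, $N\mapsto n$ in the trivial-action case. Nothing is missing; your identification of $\ker(\cdot n)$ with $\Hom(G,M)$ in odd degrees correctly recovers the stated formula.
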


\begin{proof}
See any standard reference for group cohomology, such as \cite[Chapter 17]{DF04} or
\cite[Chapter 6]{Wei94}.
\end{proof}

\subsection{Extending a result of Meier}

The main result of this section is Corollary \ref{Meier stalk}, which was
originally stated in the $n=2$ case in \cite{Mei18} and then generalized
to the $n\geq2$ case in \cite{Ach24} and \cite{Bis25}. In \cite{Ach24}, conditions are given for when it
applies for $n\geq1$. We now upgrade this to all cases for $n\geq1$, using Lemma \ref{key lemma}.

\begin{definition}[\cite{ATW20}, 2.3.1]\label{coarsening}
    A morphism $f:X\rightarrow Y$ of stacks is called a \textit{coarsening}
    if the inertia $\mathcal I_{X/Y}$ is finite over $X$ and for any flat $Z\rightarrow Y$
    from an algebraic space $Z$ the resulting base change
    $X\times_YZ\rightarrow Z$ is a coarse moduli space.
\end{definition}

\begin{proposition}[\cite{ACV03} Proposition A.0.1, \cite{Mei18} Proposition 8]\label{Meier structure}
    Let $\mathcal X$ be a separated tame Deligne-Mumford stack of finite type
    over $\mathbbm k$ with coarsening $f:\mathcal X\rightarrow X$.
    Let $x:\Spec K\rightarrow X$ be a geometric point. Let $G=(\mathcal I_{\mathcal X/X})_x$ be
    the relative stabilizer.
    Then for all $n\geq 0$
    $$
    (\mathbf R^nf_*\mathbb G_m)_x=\coh^n(G,R^{\times})
    $$
    for some strictly Henselian local ring $R$ with residue field $K$. Moreover, the group
    $G$ acts trivially on $K$.
\end{proposition}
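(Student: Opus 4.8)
The plan is to reduce the computation of the stalk to a quotient-stack model and then run the descent spectral sequence for the coarsening atlas. First I would unwind the definition of the stalk as the colimit
$$
(\mathbf R^nf_*\mathbb G_m)_x=\varinjlim_{U}\coh^n(\mathcal X\times_X U,\mathbb G_m)
$$
over étale neighborhoods $U\to X$ of $x$, which computes the étale cohomology of the base change $\mathcal X\times_X\Spec\mathcal O_{X,x}^{\mathrm{sh}}$. Since $\mathcal X$ is a tame separated Deligne--Mumford stack, the local structure theorem for tame stacks identifies this base change with a quotient stack $[\Spec R/G]$, where $G$ is the relative stabilizer at a lift of $x$ (finite, of order prime to $\fieldchar\mathbbm k$ by tameness), the ring $R$ is the corresponding strictly Henselian local ring with residue field $K$, and $R^G=\mathcal O_{X,x}^{\mathrm{sh}}$. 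This is precisely the step where tameness is essential: it guarantees the étale-local presentation as a quotient of a single strictly Henselian chart by a finite group whose order is invertible.

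Next I would compute $\coh^n([\Spec R/G],\mathbb G_m)$ using the atlas $\pi:\Spec R\to[\Spec R/G]$. Because $G$ is finite, the nerve of the presenting groupoid has terms $\Spec R\times G^{\times p}$, producing the descent (equivariant cohomology) spectral sequence
$$
E_2^{p,q}=\coh^p\!\bigl(G,\coh^q(\Spec R,\mathbb G_m)\bigr)\ \Longrightarrow\ \coh^{p+q}([\Spec R/G],\mathbb G_m).
$$
The crux, and the step I expect to be the main obstacle, is the vanishing of the higher rows: I claim $\coh^q(\Spec R,\mathbb G_m)=0$ for all $q>0$. For $q=1$ this is just $\Pic R=0$ since $R$ is local; for $q\ge 2$ it follows from the fact that a strictly Henselian local ring has no nontrivial étale covers, so its higher étale $\mathbb G_m$-cohomology vanishes (this is where \emph{strict} henselianity, not merely henselianity, is used, and where one must also be careful to verify that the $E_2$-page is genuinely group cohomology of $G$ rather than cohomology of a larger classifying topos). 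Granting this, the spectral sequence collapses onto its bottom row and yields $\coh^n([\Spec R/G],\mathbb G_m)=\coh^n\bigl(G,\coh^0(\Spec R,\mathbb G_m)\bigr)=\coh^n(G,R^\times)$, which is exactly the claimed formula.

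Finally, for the triviality of the action I would argue that the ring map $R^G\to R$ is $G$-equivariant with $G$ acting trivially on the base $R^G=\mathcal O_{X,x}^{\mathrm{sh}}$, whose residue field is $K$. Passing to residue fields at the closed points, $G$ acts on the residue field of $R$ by automorphisms fixing $K$; since in the tame setting this residue field is again $K$, the $G$-action on $K$ is by $K$-algebra automorphisms and hence trivial. Equivalently, one can observe directly that the elements of $G$ are $2$-automorphisms of the geometric point $\Spec K\to\mathcal X$ and are therefore $K$-linear, so they fix $K$ pointwise.
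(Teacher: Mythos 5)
Your proposal is correct and takes essentially the same route as the proof the paper points to (the paper's own proof is just the citation to \cite[Proposition 2.11]{Bis25}, which rests on \cite{ACV03} Proposition A.0.1 and \cite{Mei18} Proposition 8): reduce the stalk to $\coh^n$ of $\mathcal X\times_X\Spec\mathcal O_{X,x}^{\mathrm{sh}}\cong[\Spec R/G]$, run the Cartan--Leray descent spectral sequence for the $G$-torsor $\Spec R\rightarrow[\Spec R/G]$, and collapse it using the vanishing of higher \'etale cohomology of a strictly Henselian local ring. The one inaccuracy is your aside that tameness is ``essential'' for the local quotient presentation --- \cite{ACV03} Proposition A.0.1 holds for any separated Deligne--Mumford stack, tame or not (tameness is only needed later, in Corollary \ref{Meier stalk} via Lemma \ref{key lemma}) --- but since tameness is a standing hypothesis here, this does not affect the correctness of your argument.
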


\begin{proof}
See \cite[Proposition 2.11]{Bis25}.
\end{proof}

\begin{lemma}\label{key lemma}
Let $A$ be a $G$-module. Let $p$ be a prime that is coprime to $|G|$.
Then for all $n\geq1$,
$$
\coh^n(G,A)=\coh^n(G,A[1/p]).
$$
\end{lemma}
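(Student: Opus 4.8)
The plan is to realize $A[1/p]$ as a filtered colimit and to exploit the fact that, for $n\geq1$, the cohomology $\coh^n(G,A)$ is annihilated by $|G|$. First I would write
$$
A[1/p]=\varinjlim\left(A\xrightarrow{\,p\,}A\xrightarrow{\,p\,}A\xrightarrow{\,p\,}\cdots\right),
$$
the colimit of the directed system in which every transition map is multiplication by $p$; concretely $A[1/p]=A\otimes_{\mathbb Z}\mathbb Z[1/p]$, and the structure map $A\to A[1/p]$ into the first stage is exactly the canonical inclusion whose induced map on cohomology is the comparison map we wish to show is an isomorphism.

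The key step is to show that $\coh^n(G,-)$ commutes with filtered colimits of $G$-modules. Since $p$ is coprime to $|G|$ the group $G$ is finite, so $\mathbb Z G$ is Noetherian and the trivial module $\mathbb Z$ is finitely generated over it; hence $\mathbb Z$ admits a resolution $P_\bullet\to\mathbb Z$ by \emph{finitely generated} free $\mathbb Z G$-modules. For such a resolution each functor $\Hom_{\mathbb Z G}(P_i,-)\cong(-)^{r_i}$ commutes with filtered colimits, and filtered colimits of abelian groups are exact, hence commute with passing to the cohomology of the complex $\Hom_{\mathbb Z G}(P_\bullet,-)$. This yields
$$
\coh^n(G,A[1/p])=\varinjlim\coh^n(G,A),
$$
where the transition maps on the right are again multiplication by $p$.

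Finally I would invoke the standard fact that for $n\geq1$ the group $\coh^n(G,A)$ is killed by $m=|G|$ (in the cyclic case this is immediate from Proposition \ref{group coho}, and in general it follows from the restriction--corestriction argument). Because $\gcd(p,m)=1$, choosing integers $u,v$ with $up+vm=1$ shows that multiplication by $p$ acts as multiplication by the unit $u\bmod m$, hence invertibly, on the $m$-torsion group $\coh^n(G,A)$. A directed colimit of isomorphisms is canonically isomorphic to any of its terms, so $\varinjlim\coh^n(G,A)=\coh^n(G,A)$, and the comparison map is an isomorphism for all $n\geq1$. The main obstacle is the interchange of cohomology with the filtered colimit: everything hinges on producing a resolution of $\mathbb Z$ by finitely generated projective $\mathbb Z G$-modules, which is precisely where the finiteness of $G$ is essential; once that is in hand, the torsion estimate finishes the argument with no further computation.
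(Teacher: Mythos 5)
Your proof is correct, but it takes a genuinely different route from the paper's. The paper works with the localization map $f\colon A\to A[1/p]$ and its four-term exact sequence $0\to\ker f\to A\to A[1/p]\to\coker f\to 0$: since $\ker f$ and $\coker f$ are $p$-power torsion and $p$ is coprime to $|G|$, their higher cohomology vanishes, and two applications of the long exact sequence identify $\coh^n(G,A)$ with $\coh^n(G,\im f)$ and then with $\coh^n(G,A[1/p])$. The one delicate point there is the degree $n=1$ case, where $\coh^{0}(G,\coker f)$ need not vanish and the paper must argue separately that its image in the $p$-torsion-free group $\coh^1(G,\im f)$ is trivial. You replace all of this with the colimit description $A[1/p]=\varinjlim\bigl(A\xrightarrow{p}A\xrightarrow{p}\cdots\bigr)$, the commutation of $\coh^n(G,-)$ with filtered colimits (justified correctly via a resolution of $\mathbb Z$ by finitely generated free $\mathbb Z G$-modules, which exists because $G$ is finite), and the restriction--corestriction fact that $|G|$ kills $\coh^n(G,A)$ for $n\geq1$, so that the transition maps in the colimit of cohomology groups are isomorphisms. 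Your approach buys uniformity: the same argument handles every degree $n\geq 1$ at once with no special case, and it makes clear that inverting any set of primes coprime to $|G|$ leaves cohomology unchanged. The paper's approach buys elementarity: it needs only long exact sequences and the vanishing of cohomology with $p$-group coefficients, with no appeal to finitely generated resolutions or exactness of filtered colimits. Both hinge on the same arithmetic input, the coprimality of $p$ and $|G|$, and both are complete proofs of the lemma as stated.
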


\begin{proof}
Let $n\geq 1$. 
Let $f:A\rightarrow A[1/p]$ be the natural map.
Consider the sequence
$$
0\rightarrow\ker f\rightarrow A\rightarrow A[1/p]\rightarrow\coker f\rightarrow0.
$$
This leads to two short exact sequences
\begin{equation}\label{A ker sequence}
0\rightarrow\ker f\rightarrow A\rightarrow \im f\rightarrow 0
\end{equation}
and
\begin{equation}\label{A coker sequence}
0\rightarrow\im f\rightarrow A[1/p]\rightarrow\coker f\rightarrow 0.
\end{equation}

Taking cohomology for Equation \eqref{A ker sequence} yields
$$
\coh^n(G,\ker f)\rightarrow\coh^n(G,A)\rightarrow\coh^n(G,\im f)\rightarrow\coh^{n+1}(G,\ker f).
$$
Note that $f$ is injective on elements which are coprime to $p$, and hence
$\ker f$ is a $p$-group. Therefore $\coh^i(G,\ker f)=0$ for all $i\geq 1$, since $|G|$ is coprime to $p$.
Thus
$$
\coh^n(G,A)=\coh^n(G,\im f).
$$

Repeating this for Equation \eqref{A coker sequence} gives
$$
\coh^{n-1}(G,\coker f)\rightarrow\coh^n(G,\im f)\rightarrow\coh^n(G,A[1/p])\rightarrow
\coh^n(G,\coker f).
$$
Now, the cokernel of $f$ is also a $p$-group, and hence
$$
\coh^n(G,\coker f)=0.
$$
Since $\coh^n(G,\im f)$ contains no $p$-torsion, we see
that the image of the $p$-group $\coh^{n-1}(G,\coker f)$ is trivial
(note that we are \textit{not} saying
that $\coh^{n-1}$ vanishes, since it may not in the case $n=1$). Therefore
\[\coh^n(G,A)=\coh^n(G,\im f)=\coh^n(G,A[1/p]).\qedhere\]
\end{proof}

\begin{corollary}[\cite{Mei18} Lemma 9, \cite{Ach24} Lemma 4.1, \cite{Bis25} Corollary 2.12]\label{Meier stalk}
Let $\mathcal X$ and $G$ be as in Proposition \ref{Meier structure}. Then for all geometric points
$x:\Spec K\rightarrow X$ and for all $n\geq 1$,
$$
(\mathbf R^nf_*\mathbb G_m)_x=\coh^n(G,K^{\times}).
$$
\end{corollary}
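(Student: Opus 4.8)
The plan is to combine Proposition \ref{Meier structure} with Lemma \ref{key lemma}. Proposition \ref{Meier structure} already identifies $(\mathbf R^nf_*\mathbb G_m)_x$ with $\coh^n(G,R^{\times})$ for a strictly Henselian local ring $R$ with residue field $K$, so it suffices to show that the reduction map $R^{\times}\to K^{\times}$ induces an isomorphism $\coh^n(G,R^{\times})\xrightarrow{\sim}\coh^n(G,K^{\times})$ for every $n\geq1$.

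First I would set up the short exact sequence of $G$-modules
$$
1\to U\to R^{\times}\to K^{\times}\to1,\qquad U:=1+\mathfrak m_R,
$$
where surjectivity on the right uses that $R$ is local (any lift of a unit of $K$ avoids $\mathfrak m_R$, hence is a unit of $R$), and where $G$-equivariance follows because any ring automorphism of $R$ preserves the unique maximal ideal $\mathfrak m_R$. Via the long exact cohomology sequence, the corollary reduces to the vanishing $\coh^n(G,U)=0$ for all $n\geq1$.

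The heart of the argument is this vanishing, and this is where Lemma \ref{key lemma} enters. Let $p=\fieldchar K$; tameness guarantees that $p$ is coprime to $|G|$. For each prime $\ell\neq p$ I would invoke Hensel's lemma: since $\ell$ is a unit in $R$, the polynomial $T^{\ell}-u$ has, for every $u\in U$, a simple root congruent to $1$ modulo $\mathfrak m_R$, so the $\ell$-power map is a bijection of $U$ (existence giving surjectivity, uniqueness giving injectivity). Consequently $U[1/p]$ is uniquely divisible, i.e.\ a $\mathbb Q$-vector space; and since the higher cohomology of a finite group is annihilated by its order while $|G|$ acts invertibly on a $\mathbb Q$-vector space, $\coh^n(G,U[1/p])=0$ for $n\geq1$. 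Lemma \ref{key lemma} then yields $\coh^n(G,U)=\coh^n(G,U[1/p])=0$ (the case $p=0$ being immediate, since then $U$ is already uniquely divisible and $U[1/p]=U$). Feeding this back into the long exact sequence gives $\coh^n(G,R^{\times})\cong\coh^n(G,K^{\times})$, and composing with Proposition \ref{Meier structure} finishes the proof.

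I expect the main obstacle to be the clean verification that $U$ is uniquely $\ell$-divisible for all primes $\ell\neq p$ — in particular checking both surjectivity and injectivity of the $\ell$-power map through the existence and uniqueness clauses of Hensel's lemma — together with confirming that the hypotheses of Lemma \ref{key lemma} are genuinely met, namely that the kernel and cokernel of $U\to U[1/p]$ are $p$-groups, so that the lemma applies to $U$ at the residue characteristic $p$.
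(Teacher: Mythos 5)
Your proof is correct, and it shares the paper's broad strategy: reduce via Proposition \ref{Meier structure} to comparing $\coh^n(G,R^{\times})$ with $\coh^n(G,K^{\times})$, use Lemma \ref{key lemma} to neutralize the residue characteristic, and exhibit a kernel that is a $\mathbb Q$-vector space, hence cohomologically invisible. But the execution is genuinely different. The paper inverts $p$ first, applying Lemma \ref{key lemma} twice (once to $R^{\times}$, once to $K^{\times}$), and then studies $\ker\left(R^{\times}[1/p]\to K^{\times}[1/p]\right)$: it is divisible because $R^{\times}[1/p]$ and $K^{\times}[1/p]$ are (strict Henselianness), torsion-free because the torsion of $R^{\times}[1/p]$ maps isomorphically onto that of $K^{\times}[1/p]$ by \cite[Tag 06RR]{Sta}, and therefore a $\mathbb Q$-vector space by the structure theorem for divisible groups \cite[Theorem 23.1]{Fuc70}. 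You instead take the kernel first, identify it concretely as $U=1+\mathfrak m_R$, prove unique $\ell$-divisibility of $U$ for every prime $\ell\neq p$ directly from the existence and uniqueness clauses of Hensel's lemma, and then apply Lemma \ref{key lemma} only once, to $U$ itself. Both routes rest on the same two pillars (Henselianness and the key lemma), but yours is more self-contained: unique divisibility away from $p$ makes $U[1/p]$ a $\mathbb Q$-vector space outright, with no need for the torsion-comparison citation or the structure theorem, whereas the paper keeps the argument at the level of unit groups and cited general facts. One small remark: the obstacle you flag at the end --- checking that the kernel and cokernel of $U\to U[1/p]$ are $p$-groups --- is not a hypothesis you must verify; it is established inside the proof of Lemma \ref{key lemma} for an arbitrary $G$-module, so the lemma applies to $U$ with no extra work.
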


\begin{proof}[Proof of Corollary \ref{Meier stalk}]
This proof is mostly an exact reproduction of the proof of Corollary 2.12 in \cite{Bis25}, but now extended
to work for all $n\geq1$.

If $p=\fieldchar K>0$, let $p=\fieldchar K$, and otherwise set $p=1$.
Using Proposition \ref{Meier structure}, all that's left to show is that $\coh^n(G,R^{\times})=
\coh^n(G, K^{\times})$.
Consider the exact sequence
$$
1\rightarrow\ker\rightarrow R^{\times}[1/p]
\rightarrow K^{\times}[1/p]\rightarrow 1.
$$
Since $R$ and $K$
are both strictly Henselian, we have that  $R^{\times}[1/p]$ and
$K^{\times}[1/p]$ are both divisible. Therefore $\ker$ is divisible.
Now by \cite[Tag 06RR]{Sta},
the torsion of $R^{\times}[1/p]$ maps isomorphically onto the torsion of
$K^{\times}[1/p]$, and hence $\ker$ is torsion-free. The structure theorem
for divisible groups (see \cite[Theorem 23.1]{Fuc70}) then implies that $\ker$ is a $\mathbb Q$-vector space
and hence has vanishing higher cohomology.

Therefore, taking cohomology, we see that
$$
\coh^n(G,R^{\times})=
\coh^n(G,R^{\times}[1/p])=
\coh^n(G,K^{\times}[1/p])=
\coh^n(G,K^{\times})
$$
for all $n\geq 1$, where the first and last equalities use Lemma \ref{key lemma} if $p>1$.
\end{proof}

\section{Nodal stacky curves}
We define nodal stacky curves. Recall from Section \ref{conventions}
that our stacky curves are tame, proper, one-dimensional, geometrically connected Deligne-Mumford stacks with trivial generic stabilizer of finite type over $\mathbbm k$.

\begin{definition}\label{stacky nodal curve def}
Given a stacky curve $\mathcal C$ over $\mathbbm k$, a point $p\in \mathcal C$ is a (split) node if there exists an \'etale neighborhood $U\to \mathcal C$ with $U$ a scheme such that there exists $q\in U(k)$ mapping to $p$ with $q$ a split node of $U$. Equivalently, the completion of the strictly Henselian local ring at $p$ is $\bar{\mathbbm k}[[x,y]]/(xy)$ and the preimages of $p$ in the normalization are $\mathbbm k$-rational.

A \textit{nodal stacky curve} $\mathcal C$ is a stacky curve so that every point $p\in \mathcal C$ is either a node or is smooth.
\end{definition}

The next result follows from and extends \cite{Eke95}, where its content is distributed throughout Section 1.
\begin{proposition}\label{stacky nodal structure}
Let $\mathcal C$ be as above, and $p\in\mathcal C$ a node.
Let $R$ denote the strict Henselization of the local ring of $\mathbbm k[x,y]/(xy)$ at the origin.
Then
$\mathcal C^{sh}=\mathcal C\times_C\Spec\mathcal O_{C,p}^{sh}$ is of the form 
$[\Spec R/G]$
for $G$ equal to one of either:
\begin{itemize}
\item $\mu_n$, where $\mu_n$ acts on $(x,y)$ as $(\zeta_n x,\zeta_n^a y)$ for a primitive
$n^{\text{th}}$ root of unity $\zeta_n$ and an $a\in(\mathbb Z/n)^{\times}$; or
\item an extension of $\mathbb Z/2$ by $\mu_n$, where $\mathbb Z/2$ interchanges the axes and
$\mu_n$ acts on $(x,y)$ as above with the additional condition
that $a^2\equiv1\mod n$.
\end{itemize}
\end{proposition}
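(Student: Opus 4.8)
The plan is to take the quotient presentation $\mathcal C^{sh}=[\Spec R/G]$ as supplied by the local structure theory for tame Deligne-Mumford stacks --- this is the content extracted from \cite{Eke95} --- so that $G$ is a finite group acting faithfully on $R$ and fixing the closed point (the node), and then to determine exactly which groups $G$ and which actions occur. Because $\mathcal C$ is tame, $|G|$ is coprime to $\fieldchar\mathbbm k$, and this is the hypothesis that makes the whole argument run. The node has exactly two branches, namely the two minimal primes of $R$, and $G$ permutes them; this gives a homomorphism $G\to\mathbb Z/2$, and I let $G_0$ be its kernel, so that $[G:G_0]\in\{1,2\}$ and $G_0$ is the branch-preserving subgroup.

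First I would linearize the action. Passing to the normalization $\Spec\tilde R$ splits off the two branches, each a smooth henselian curve on which $G_0$ acts fixing the closed point. By the tame form of Cartan's lemma --- averaging over $G_0$, which is legitimate precisely because $|G_0|$ is invertible --- I can choose on each branch a coordinate that is a $G_0$-eigenvector; call these $x$ and $y$. When $G\neq G_0$ I would further arrange that a fixed branch-swapping element $\sigma$ carries the first eigen-coordinate to the second, so that $\sigma$ acts as a clean interchange $x\leftrightarrow y$. Descending from $\tilde R$ back to $R$, which sits inside $\tilde R$ as the pairs of branch-functions agreeing at the node, these eigen-coordinates produce $x,y\in R$ with $xy=0$ reducing to a basis of $\mathfrak m/\mathfrak m^2$, in which $G$ acts through monomial matrices in $\mathrm{GL}_2$ preserving $\{xy=0\}$: diagonally on $G_0$ and anti-diagonally on the non-trivial coset.

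Next I would pin down $G_0$. Since $\mathcal C$ has trivial generic stabilizer, the action is generically free, so a generic geometric point of each branch has trivial stabilizer; concretely the character $\chi_x\colon G_0\to\mathbbm k^{\times}$ by which $G_0$ scales $x$ is injective, and likewise $\chi_y$. A finite subgroup of $\mathbbm k^{\times}$ is cyclic, so $G_0\cong\mu_n$. Normalizing the generator $g_0$ so that $\chi_x(g_0)=\zeta_n$ is primitive gives $g_0\cdot(x,y)=(\zeta_n x,\zeta_n^a y)$, and injectivity of $\chi_y$ forces $a\in(\mathbb Z/n)^{\times}$. If $G=G_0$ this is exactly the first case.

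Finally, in the branch-swapping case I would extract the relation $a^2\equiv1\bmod n$. As the kernel of $G\to\mathbb Z/2$, the subgroup $G_0$ is normal, so $\sigma g_0\sigma^{-1}\in G_0$. Writing $g_0=\mathrm{diag}(\zeta_n,\zeta_n^a)$ and using that any anti-diagonal $\sigma$ swaps the diagonal entries under conjugation yields $\sigma g_0\sigma^{-1}=\mathrm{diag}(\zeta_n^a,\zeta_n)$; demanding this equal a power $g_0^b=\mathrm{diag}(\zeta_n^b,\zeta_n^{ab})$ forces $b\equiv a$ and $ab\equiv1$, that is, $a^2\equiv1\bmod n$. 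This is the second case, with the extension $1\to\mu_n\to G\to\mathbb Z/2\to1$ possibly split or non-split, matching the split/non-split dichotomy of Section \ref{conventions}. I expect the main obstacle to be the linearization step --- in particular, simultaneously choosing eigen-coordinates on both branches and reconciling them with $\sigma$ --- since once that is in place the classification is a short computation with monomial matrices, and tameness is exactly what makes the required averaging available.
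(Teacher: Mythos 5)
Your overall route is the same as the paper's: present $\mathcal C^{sh}$ as $[\Spec R/G]$, map $G$ to $\mathbb Z/2$ via its action on the branches, identify the branch-preserving kernel with $\mu_n$ acting as $(\zeta_n x,\zeta_n^a y)$ with $a$ invertible (both arguments reduce to triviality of the generic stabilizer), and extract $a^2\equiv 1\bmod n$ by conjugating $\mu_n$ by a branch-swapping element. Your added care with tame linearization (Cartan averaging on the normalization) is legitimate and is left implicit in the paper's proof.

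However, one step as written is false: you cannot in general ``arrange that $\sigma$ acts as a clean interchange $x\leftrightarrow y$.'' If $\sigma(x)=y$ and $\sigma(y)=x$, then $\sigma^2$ fixes both eigen-coordinates, and faithfulness (injectivity of your character $\chi_x$) forces $\sigma^2=1$, i.e. the extension $1\to\mu_n\to G\to\mathbb Z/2\to 1$ splits. But non-split extensions genuinely occur here: any lift $\sigma$ of the generator of $\mathbb Z/2$ satisfies $\sigma^2=\zeta_n^m$ for some $m$, replacing $\sigma$ by $\sigma\zeta_n^k$ changes $m$ only by multiples of $a+1$, so no lift squares to the identity unless $d_+\mid m$ (Proposition \ref{m prop}); the paper's existence result for the groups $G_{n,a,m}$ and the non-split case of Proposition \ref{H2 doubly twisted} show that such non-split $G$ do arise as stabilizers of doubly-twisted nodes. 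So your claim, taken at face value, would ``prove'' that $G$ is always split, which contradicts the rest of the paper. Fortunately the error is inessential to this proposition: the only property your final computation actually uses is that $\sigma$ is anti-diagonal in the eigen-coordinates (it carries the $x$-axis to the $y$-axis and vice versa, possibly with nontrivial scalars, e.g. $\sigma(x)=y$ and $\sigma(y)=\zeta_n^m x$), and conjugation by \emph{any} anti-diagonal matrix swaps the diagonal entries. Dropping the clean-swap claim and keeping only anti-diagonality repairs the argument and recovers exactly the paper's computation $\sigma\zeta_n\sigma^{-1}\cdot(x,y)=(\zeta_n^a x,\zeta_n y)$, hence $k=a$ and $a^2\equiv1\bmod n$.
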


\begin{proof}
The statement that $\mathcal C^{sh}$ is of the form $[\Spec R/G]$ for some finite $G$ follows
from the assumption that $\mathcal C$ has a node at $p$.

Considering the action of $G$ on the branches of $\mathcal C$ gives a map
$G\rightarrow\mathbb Z/2$, whose kernel consists of the branch-preserving elements of $G$.
This kernel is isomorphic to some $\mu_n$, since the only
geometric stabilizers of a smooth tame curve are cyclic.
Therefore $G$ is either $\mu_n$ (in the case where all elements are
branch-preserving) or an extension of $\mathbb Z/2$ by $\mu_n$
where $\mathbb Z/2$ acts by exchanging branches
(in the case where there are branch-swapping elements).

Because $\mathcal C$ has trivial generic stabilizers,
$\zeta_n$ has to act as $(\zeta_nx,\zeta_n^ay)$ (without loss of generality) for some primitive
$n^{\text{th}}$ root of unity $\zeta_n$ and $a$ coprime to $n$,
else the $y$-branch will be stabilized everywhere by $\zeta_n^{n/\gcd(a,n)}$.
In the case where $G$ is an extension of $\mathbb Z/2$ by $\mu_n$,
we have the following further limitation on $a$.
Let $\sigma\in G$ denote a lift of the generator of $\mathbb Z/2$. Then
$$
\sigma\zeta_n\sigma^{-1}\cdot(x,y)=(\zeta_n^ax,\zeta_ny)
$$
is of the form $(\zeta_n^kx,\zeta_n^{ak}y)$ for some $k$. Hence $k=a$ and $ak=a^2\equiv1
\mod n$, as desired.
\end{proof}

For the rest of the paper, the following numbers defined in terms of $a$ and $n$ will be useful
$$
d_-=\gcd(a-1,n),\quad d_+=\gcd(a+1,n),\quad\text{and}\quad n_+=n/d_+.
$$

When $p\in \mathcal C$ is a node so that the stabilizer group $G$ acts trivially on the branches, $p$ is called a \textit{twisted node}. If all nodes in a nodal stacky curve $\mathcal C$ are twisted, $\mathcal C$ is called a \textit{twisted curve}.
Such curves have been studied extensively in the literature
(see \cite{AV02, AOV11, ACV03} among many others).
While we allow $a$ to take on any value relatively prime to $n$, most
recent literature has focused on the \textit{balanced} case where
$a\equiv-1\mod n$ (in fact, many authors now assume the balanced condition in the definition of twisted curves). 

A twisted node with any value of $a$ coprime to $n$ is possible. The action of
$\mu_n$ on $\Spec(\mathbbm k[x,y]/(xy))$ by
$\mu_n \cdot \zeta_n (x,y)=(\zeta_nx,\zeta_n^ay)$ for $a$ coprime to $n$
extends to its the closure in $\mathbb P^1\times \mathbb P^1$. The quotient of this
closure by $\mu_n$ has the desired structure at the node.

Now, we consider the other types of stacky nodes.
\begin{definition}
For a node $p\in \mathcal C$ whose stabilizer group $G$ switches the branches, we say that $p\in \mathcal C$ is
a \textit{doubly-twisted node}.
\end{definition}

The particular case of a doubly-twisted node with $G=\mu_n\rtimes\mathbb Z/2$ and $a=-1$ (hence
$G\cong D_n$) was discussed in \cite[Section 6]{Rom05}, where such a node was said to
be of \textit{dihedral type}.

\subsection{Numerology of doubly-twisted curves}\label{numerology section}
For a doubly-twisted curve, the geometric stabilizer group $G$ is by definition an
extension of $\mathbb Z/2$ by $\mu_n$. We will work out some additional
numerical properties of $G$ beyond those of Proposition \ref{stacky nodal structure}.

We have already established that the action determines $a\in\mathbb Z/n$
such that $a^2\equiv 1\mod n$. We can extract an additional piece of data from this.
Let $\sigma\in G$ be
a lift of the generator of $\mathbb Z/2$. Then $\sigma^2\in\mu_n\leq G$, and so, fixing a primitive
root $\zeta_n\in\mu_n$, we have $\sigma^2=\zeta_n^m$ for some $m$. It turns out that $m$ is
exactly the class which determines the extension class $[G]$.

\begin{proposition}
For the action of $\mathbb Z/2$ on $\mu_n$ determined by conjugation, we have
$$
\coh^2(\mathbb Z/2,\mu_n)=\frac{\ker(1-a)}{(1+a)\mu_n}.
$$
\end{proposition}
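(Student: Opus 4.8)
The plan is to recognize this as an immediate application of the cyclic-group cohomology formula of Proposition \ref{group coho}, with essentially all of the content lying in correctly identifying the module structure. The group $\mathbb Z/2$ is cyclic of order $2$, so I would take its generator $x$ to be the image of $\sigma$ and set $M=\mu_n$. The key input is the action: by the computation in the proof of Proposition \ref{stacky nodal structure}, conjugation by $\sigma$ satisfies $\sigma\zeta_n\sigma^{-1}=\zeta_n^a$, so $x$ acts on $\mu_n$ as the $a$-power map. Writing $\mu_n$ additively via the (noncanonical) identification $\mu_n\cong\mathbb Z/n$ at geometric points, per the conventions, this action is exactly multiplication by $a$.

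With the module structure fixed, I would invoke Proposition \ref{group coho} for $G=\mathbb Z/2$, where the norm element is $N=1+x$. Since $2$ is even and $\geq 2$, the proposition gives $\coh^2(\mathbb Z/2,\mu_n)=M^G/NM$, and it remains only to evaluate the two pieces. For the invariants, $M^G=\{\zeta\in\mu_n:\zeta^a=\zeta\}=\{\zeta:\zeta^{a-1}=1\}=\ker(\,\cdot(a-1)\,)$, and as subgroups $\ker(a-1)=\ker(1-a)$, so $M^G=\ker(1-a)$. For the norm image, $NM=(1+x)\mu_n=\{\zeta^{1+a}:\zeta\in\mu_n\}=(1+a)\mu_n$. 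Substituting these yields $\coh^2(\mathbb Z/2,\mu_n)=\ker(1-a)/(1+a)\mu_n$, which is the claimed formula.

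There is no substantive obstacle here; the only care required is bookkeeping. First, one must select the even-degree branch of Proposition \ref{group coho}, which is legitimate precisely because $2\geq 2$. Second, the multiplicative-to-additive translation must be kept straight, in particular observing that the defining condition $\zeta^{a-1}=1$ for invariants is equivalent to lying in $\ker(1-a)$, so the sign ambiguity between $a-1$ and $1-a$ is harmless. Notably, no divisibility or surjectivity hypothesis enters, since I am using the general formula $M^G/NM$ rather than the trivial-action specialization at the end of Proposition \ref{group coho}.
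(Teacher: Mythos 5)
Your proposal is correct and matches the paper's approach exactly: the paper's proof is simply ``This follows from Proposition \ref{group coho},'' and your write-up supplies precisely the implicit details (generator acting by the $a$-power map, even-degree branch $M^G/NM$, and the identifications $M^G=\ker(1-a)$ and $NM=(1+a)\mu_n$).
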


\begin{proof}
This follows from Proposition \ref{group coho}.
\end{proof}

\begin{proposition}\label{m prop}
The integer $m$ is well-defined up to addition by $1+a$, is in the kernel of multiplication by $1-a$, and determines the extension
class of $[G]$. Moreover, the extension is trivial if and only if $d_+|m$.
\end{proposition}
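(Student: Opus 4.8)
The plan is to read off each assertion directly from the chosen lift $\sigma$ and then feed the resulting data into the standard dictionary between $\coh^2$ and extensions, using the computation $\coh^2(\mathbb Z/2,\mu_n)=\ker(1-a)/(1+a)\mu_n$ from the preceding proposition. First I would record how $m$ depends on the lift. Any other lift has the form $\sigma'=\zeta_n^j\sigma$, and the conjugation relation $\sigma\zeta_n\sigma^{-1}=\zeta_n^a$ gives
\[
(\sigma')^2=\zeta_n^j\,(\sigma\zeta_n^j\sigma^{-1})\,\sigma^2=\zeta_n^{\,m+(1+a)j}.
\]
Thus $m$ changes by $(1+a)j$, i.e.\ it is well-defined modulo $(1+a)\mu_n$, which is exactly the meaning of ``well-defined up to $1+a$.''

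Second, I would show $m\in\ker(1-a)$. Since $\sigma$ commutes with $\sigma^2$, conjugating $\sigma^2=\zeta_n^m$ by $\sigma$ yields $\zeta_n^m=\sigma\zeta_n^m\sigma^{-1}=\zeta_n^{am}$, so $am\equiv m\pmod n$. Combined with the first step and the identification $\coh^2(\mathbb Z/2,\mu_n)=\ker(1-a)/(1+a)\mu_n$, this shows that $m$ descends to a well-defined class in $\coh^2(\mathbb Z/2,\mu_n)$.

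Third, to see that this class is the extension class of $[G]$, I would invoke the standard bijection between $\coh^2(\mathbb Z/2,\mu_n)$, taken with the fixed conjugation action, and the isomorphism classes of extensions inducing that action. Taking the normalized set-theoretic section $s$ with $s(1)=1$ and $s(t)=\sigma$, the associated cocycle $f(g,h)=s(g)s(h)s(gh)^{-1}$ has its only nontrivial value at $f(t,t)=\sigma^2=\zeta_n^m$; its cocycle condition is precisely $(a-1)m\equiv0$, matching the second step, and comparing with the cyclic-group formula of Proposition \ref{group coho} identifies $[f]$ with the class of $m$. Hence $m$ determines $[G]$.

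Finally, the extension is trivial exactly when $G\cong\mu_n\rtimes\mathbb Z/2$, that is, when some lift is an involution $(\sigma')^2=1$. By the first computation this is solvable in $j$ iff $m\in(1+a)\mathbb Z+n\mathbb Z=\gcd(1+a,n)\mathbb Z=d_+\mathbb Z$, i.e.\ iff $d_+\mid m$; equivalently, the class of $m$ in $\coh^2(\mathbb Z/2,\mu_n)$ vanishes iff $d_+\mid m$. I expect the only genuine care to lie in the third step—matching the normalization and orientation of the cocycle so that the class of $m$ is literally the extension class rather than its inverse—together with the elementary but necessary identity $(1+a)\mu_n=d_+\mu_n$ inside $\mathbb Z/n$, which is what lets the ambiguity ``up to $1+a$'' and the divisibility criterion ``$d_+\mid m$'' refer to the same subgroup.
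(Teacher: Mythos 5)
Your proof is correct and takes essentially the same approach as the paper: deduce $m\in\ker(1-a)$ from conjugation-invariance of $\sigma^2$, note well-definedness modulo $(1+a)\mu_n$, identify the resulting class in $\coh^2(\mathbb Z/2,\mu_n)=\ker(1-a)/(1+a)\mu_n$ with the extension class via the standard factor-set dictionary, and read off triviality from $(1+a)\mathbb Z/n=d_+\mathbb Z/n$. You simply make explicit the lift-change computation $(\sigma')^2=\zeta_n^{m+(1+a)j}$ and the cocycle normalization that the paper's terser proof leaves implicit.
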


\begin{proof}
Since $\sigma^2$ is fixed by conjugation, and conjugation acts as multiplication by $a$,
we see that $m$ is in the kernel of multiplication by
$1-a$ in $\mu_n$, and is well-defined up to $(1+a)\mu_n$. Hence
it determines a class in $\coh^2(\mathbb Z/2,\mu_n)$, and this class corresponds precisely to
$[G]$. To see the last statement, simply observe that the trivial extensions correspond to
\[
m\in(1+a)\mathbb Z/n=d_+\mathbb Z/n. \qedhere
\]
\end{proof}

\begin{proposition}\label{a m prop}
The group $G$ is determined completely by $a$ and $m$.
\end{proposition}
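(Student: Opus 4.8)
The plan is to exhibit an explicit normal form for the elements of $G$ and then check that the entire group law is dictated by the two parameters $a$ and $m$. Since $G$ fits in an extension $1\to\mu_n\to G\to\mathbb{Z}/2\to 1$ with $\mu_n$ normal of index two, every element of $G$ lies in exactly one of the two cosets $\mu_n$ or $\sigma\mu_n$. Fixing the primitive root $\zeta=\zeta_n$ and the chosen lift $\sigma$, I would first observe that each element of $G$ can be written uniquely as $\zeta^i$ or $\zeta^i\sigma$ with $0\le i<n$: uniqueness is clear because $\sigma\notin\mu_n$ (its image generates the quotient $\mathbb{Z}/2$), and the resulting count of $2n$ elements matches $|G|$.

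Next I would record the two defining relations in the form needed for computation: conjugation gives $\sigma\zeta^j=\zeta^{aj}\sigma$ (from $\sigma\zeta\sigma^{-1}=\zeta^a$), and $\sigma^2=\zeta^m$. Using only these, one computes the four possible products of normal-form elements,
$$\zeta^i\cdot\zeta^j=\zeta^{i+j},\qquad \zeta^i\cdot\zeta^j\sigma=\zeta^{i+j}\sigma,$$
$$\zeta^i\sigma\cdot\zeta^j=\zeta^{i+aj}\sigma,\qquad \zeta^i\sigma\cdot\zeta^j\sigma=\zeta^{i+aj+m}.$$
In each case the product is again a normal-form element whose exponent is an explicit function of $i$, $j$, $a$, and $m$ (reduced modulo $n$). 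Hence the full multiplication table of $G$ is determined once $a$ and $m$ are fixed, which is exactly the assertion of the proposition.

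The only point requiring care is the internal consistency of the two relations. Comparing $\sigma\cdot\sigma^2$ with $\sigma^2\cdot\sigma$ forces $\zeta^{am}=\zeta^m$, i.e. $(a-1)m\equiv 0\bmod n$; this is precisely the condition established in Proposition \ref{m prop} that $m$ lies in $\ker(1-a)$, so no contradiction arises and the presentation
$$\langle\,\zeta,\sigma\mid\zeta^n=1,\ \sigma^2=\zeta^m,\ \sigma\zeta\sigma^{-1}=\zeta^a\,\rangle$$
is consistent. I do not anticipate any genuine obstacle beyond this bookkeeping; the substance of the proposition is simply the observation that a metacyclic extension of $\mathbb{Z}/2$ by $\mu_n$ is rigid once its conjugation parameter $a$ and its squaring parameter $m$ are specified.
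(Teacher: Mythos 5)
Your proof is correct, but it takes a genuinely different route from the paper. The paper's proof is cohomological: $a$ fixes the action of $\mathbb Z/2$ on $\mu_n$, extensions with a fixed action are classified by $\coh^2(\mathbb Z/2,\mu_n)$, and Proposition \ref{m prop} identifies $m$ with the extension class $[G]$, so $(a,m)$ pins down $G$. You instead argue by bare hands: unique normal form $\zeta^i$ or $\zeta^i\sigma$, and an explicit multiplication table whose entries are functions of $i,j,a,m$ alone, so that matching normal forms between any two groups with the same parameters yields an isomorphism (this last step — that the coset-matching bijection is a homomorphism precisely because the tables agree — is the one point you leave implicit, but it is immediate). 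Your approach is elementary and self-contained, makes the group law of $G_{n,a,m}$ explicit (which is essentially the presentation the paper later uses when constructing the example curve $[X/H]$ and in computing $G^{\text{ab}}$), and does not require knowing that $\coh^2$ classifies extensions; your consistency check $(a-1)m\equiv 0 \bmod n$ is not logically needed for uniqueness, since $G$ is assumed to exist, but it is harmless. The paper's approach is shorter and slots $G$ into the cohomological framework that is reused later, e.g.\ in the Brauer group computation where the differential $d_2^{1,1}$ is evaluated against the extension class determined by $m$; that conceptual identification is exactly what your computation does not provide.
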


\begin{proof}
The element $a$ determines the action of $\mathbb Z/2$ on $\mu_n$.
For a fixed action of $\mathbb Z/2$ on $\mu_n$, the extensions of $\mathbb Z/2$
by $\mu_n$ with the given action are parametrized by $\coh^2(\mathbb Z/2,\mu_n)$.
By Proposition \ref{m prop} $m$ determines the class of $[G]$ in
$\coh^2(\mathbb Z/2,\mu_n)$.
\end{proof}

Given $n,a,m$ under these constraints, we can construct a stacky curve whose stabilizer group is the group $G$ coming from this data. The action of $(\mathbbm k^\times)^2\ltimes \mathbb Z/2$ on $\Spec(\mathbbm k[x,y]/(xy))$ given by
\[(a,b)\cdot (x,y)=(ax,by)\]
for $a,b\in\mathbbm k^\times$ and 
\[\tau\cdot (x,y)=(y,x),\]
where $\tau$ is the generator of $\mathbb Z/2,$ extends to its closure in $\mathbb P^1\times \mathbb P^1$. The quotient of this closure by the subgroup generated by $(\zeta_n,\zeta_n^a),\tau(1,\zeta_n^m)$ has the desired stack structure at the node.

Lastly, we will need the following result later. 
\begin{proposition}\label{d_-d_+}
We have that $d_-d_+=n$ or $2n$.
\end{proposition}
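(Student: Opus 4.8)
The plan is to prove the stronger divisibility $n \mid d_-d_+ \mid 2n$, from which the claim is immediate, since the only multiples of $n$ dividing $2n$ are $n$ and $2n$ themselves. I would establish this by comparing $p$-adic valuations on both sides for each prime $p \mid n$, writing $v_p$ for the $p$-adic valuation, and using $v_p(d_-)=\min(v_p(a-1),v_p(n))$ and $v_p(d_+)=\min(v_p(a+1),v_p(n))$. The starting observation is that the standing hypothesis $a^2\equiv 1 \pmod n$ is equivalent to $n \mid (a-1)(a+1)$, so that $v_p(n)\le v_p(a-1)+v_p(a+1)$ for every $p\mid n$.

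For an odd prime $p\mid n$, the key point is that $p$ cannot divide both $a-1$ and $a+1$, since their difference is $2$; hence $p$ divides at most one of the two factors. Combined with $v_p(n)\le v_p(a-1)+v_p(a+1)$, the entire power $p^{v_p(n)}$ must divide the single factor it can touch, so one of $v_p(d_-),v_p(d_+)$ equals $v_p(n)$ while the other is $0$. Thus the $p$-part of $d_-d_+$ agrees exactly with the $p$-part of $n$ for every odd prime.

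The main obstacle, and the genuine source of the dichotomy, is the prime $p=2$. Here I would first note that $a$ is a unit modulo $n$ (as $a\cdot a\equiv 1$), so if $n$ is even then $a$ is odd and both $a-1$ and $a+1$ are even. When $v_2(n)=1$, both $v_2(d_-)$ and $v_2(d_+)$ equal $1$, contributing $2^2$ against the single factor $2^1$ in $n$, an excess of exactly one power of $2$. When $v_2(n)\ge 2$, exactly one of $a\pm 1$ has $2$-adic valuation $1$ (consecutive even integers cannot both be divisible by $4$), while the other has valuation $v_2(a^2-1)-1\ge v_2(n)-1$. A short case check on whether $v_2(a^2-1)$ equals $v_2(n)$ or strictly exceeds it then shows that $v_2(d_-d_+)$ is either $v_2(n)$ or $v_2(n)+1$. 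In every case the $2$-part of $d_-d_+$ lies between $2^{v_2(n)}$ and $2^{v_2(n)+1}$.

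Assembling the valuation comparisons over all primes yields $v_p(d_-d_+)=v_p(n)$ for odd $p$ together with $v_2(n)\le v_2(d_-d_+)\le v_2(n)+1$, which is precisely the assertion $n\mid d_-d_+\mid 2n$. I expect the only delicate bookkeeping to be the $2$-adic case above; it is no accident that its two possible outcomes foreshadow the later split versus non-split distinction, since the extra factor of $2$ is exactly the obstruction measured there.
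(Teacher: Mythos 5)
Your proof is correct, and it reaches the same intermediate statement the paper's proof does — the sandwich $n \mid d_-d_+ \mid 2n$ — but by a genuinely different technique. The paper argues globally with gcd/lcm identities: $\gcd(d_-,d_+)$ divides $(a+1)-(a-1)=2$, and since $\lcm(d_-,d_+)=d_-d_+/\gcd(d_-,d_+)$ divides $n$, the upper bound $d_-d_+\mid 2n$ follows at once; the lower bound is the one-line assertion that $n\mid(a-1)(a+1)$ forces $n\mid d_-d_+$. Your prime-by-prime valuation analysis localizes both bounds: your odd-prime case is exactly the local shadow of the observation $\gcd(d_-,d_+)\mid 2$, and your $2$-adic case analysis isolates where the possible extra factor of $2$ can arise. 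What your route buys: it makes fully explicit the step the paper states without justification, namely that $n\mid(a-1)(a+1)$ implies $n\mid\gcd(a-1,n)\gcd(a+1,n)$ (which itself requires either a valuation argument like yours or the standard divisibility $\gcd(xy,n)\mid\gcd(x,n)\gcd(y,n)$), and it additionally determines which of the two values $n$ or $2n$ occurs in terms of $v_2(n)$ and $v_2(a^2-1)$, information the paper's proof does not extract. What the paper's route buys is brevity: the identity $\gcd\cdot\lcm=$ product collapses your entire case analysis into two lines. Your closing remark that the excess factor of $2$ foreshadows the split/non-split dichotomy in Proposition \ref{H2 doubly twisted} is a nice observation, though not needed for the statement.
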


\begin{proof}
Let $g=\gcd(d_-,d_+)$. Then $g$ divides $(a+1)-(a-1)=2$, and so $g=1$ or 2.
Since
$$
\lcm(d_-,d_+)=\frac{d_-d_+}g
$$
divides $n$ (as both $d_-$ and $d_+$ do), we see that $d_-d_+$ divides $gn$, which is either $n$ or $2n$.

Because $n$ divides $a^2-1$, we have $n|d_-d_+$. Hence $d_-d_+$ is either $n$ or $2n$.
\end{proof}

\section{Doubly-twisted nodes}\label{doubly-twisted section}

\begin{convention}\label{doubly twisted conventions}
Throughout this section we fix a tame stacky curve $\mathcal C$ over a field
$\mathbbm k$
which has a doubly-twisted node $p: BG\to  \mathcal C$,
as well as its coarsening at $p$, $\pi:\mathcal C\rightarrow C$.
Let
$\eta:\tilde{\mathcal C}\rightarrow\mathcal C$ be the partial normalization at $p$.
Lastly, recall that we have fixed
$d_+=\gcd(a+1,n)$, $d_-=\gcd(a-1,n)$, and $n_+=n/d_+$.
\end{convention}

\subsection{The Leray spectral sequence}
Following \cite{Bis25}, we use the Leray spectral sequence for $\pi:\mathcal C\rightarrow C$
$$
\coh^p(C,\mathbf R^q\pi_*\mathbb G_m)\implies\coh^{p+q}(\mathcal C,\mathbb G_m).
$$
Since $\pi_*\mathbb G_m=\mathbb G_m$ for coarsenings,
the sequence of low-degree terms for this yields
\[
0\rightarrow\Pic C\rightarrow\Pic\mathcal C\rightarrow
\coh^0(C,\mathbf R^1\pi_*\mathbb G_m)\rightarrow \coh^2(C,\mathbb G_m)\rightarrow\coh^2(\mathcal C,\mathbb G_m),
\]
using that $\coh^1(X,\mathbb G_m)=\Pic X$. 

By Corollary \ref{Meier stalk}, $\mathbf R^1\pi_*\mathbb G_m$ vanishes away from the node,
where its stalk is
$$
(\mathbf R^1\pi_*\mathbb G_m)_p=\coh^1(G,\bar{\mathbbm k}^{\times}_{\text{triv}})
$$
with the trivial action of $G$ on $\mathbbm k^{\times}$. Since
$$
\coh^1(G,\bar{\mathbbm k}^{\times}_{\text{triv}})=\Hom(G,\bar{\mathbbm k}^{\times}_{\text{triv}})=
\Hom(G^{\text{ab}},\bar{\mathbbm k}^{\times})
$$
the below sequence is exact
\[
0\rightarrow\Pic C\rightarrow\Pic\mathcal C\rightarrow
\Hom(G^{\text{ab}},\bar{\mathbbm k}^{\times})\rightarrow\coh^2(C,\mathbb G_m)\rightarrow\coh^2(\mathcal C,\mathbb G_m).
\]

\begin{lemma}\label{normalization}
The preimage of $p$ in $\tilde{\mathcal C}$ is a single point with
stabilizer $\mu_n$, and $C$ is smooth at the image of $p$.
Moreover, $C$ is the coarsening of $\mathcal C$ at $p$.
\end{lemma}

\begin{proof}
The \'{e}tale local structure of $\mathcal C$ at the node is
$$
\left[\frac{\Spec\mathbbm k[x,y]/(xy)} G\right]
$$
where $G$ swaps the branches,
and so the \'{e}tale local structure of the normalization of $\mathcal C$ is
$$
\left[\frac{\Spec\mathbbm k[z]}{\mu_n}\right].
$$
That is, it is smooth with geometric stabilizer $\mu_n$.
Moreover, the composition $\tilde{\mathcal C}\to \mathcal C\to C$
is also the coarsening of $\tilde{\mathcal C}$.
\end{proof}

\begin{lemma}\label{H2 injectivity}
The following sequence is exact
\begin{equation}\label{Leray}
0\rightarrow\Pic C\rightarrow\Pic\mathcal C\rightarrow
\Hom(G^{\text{ab}},\bar{\mathbbm k}^{\times})\rightarrow0.
\end{equation}
\end{lemma}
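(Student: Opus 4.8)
We need to show that the map
$$
\Pic(\mathcal C)\rightarrow\Hom(G^{\text{ab}},\bar{\mathbbm k}^{\times})
$$
in the low-degree Leray sequence is surjective, which is equivalent to showing that the connecting map
$$
\Hom(G^{\text{ab}},\bar{\mathbbm k}^{\times})\rightarrow\coh^2(C,\mathbb G_m)
$$
is zero. The plan is to exploit Tsen's Theorem. Since $C$ is the coarse space of a stacky curve over $\mathbbm k$, it is a (possibly nodal) curve, and over an algebraically closed field $\bar{\mathbbm k}$ we have $\coh^2(C,\mathbb G_m)=0$ by Theorem \ref{Tsen}. Once $\coh^2(C,\mathbb G_m)=0$, the five-term sequence already written down immediately truncates to the desired short exact sequence, and injectivity of $\Pic(C)\to\Pic(\mathcal C)$ is inherited from the original Leray sequence.

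**The main subtlety.** The statement of Tsen's Theorem as recorded in Theorem \ref{Tsen} is for curves over an algebraically closed field, whereas $\mathbbm k$ is not assumed algebraically closed outside the Brauer-group context (see Section \ref{conventions}). So the honest issue is whether we may reduce to the algebraically closed case. I expect this to be the one genuine obstacle. There are two reasonable routes. The first is simply to observe that the Lemma is being used in the sequel only to compute $\Pic\mathcal C$, and the $\Hom(G^{\text{ab}},\bar{\mathbbm k}^{\times})$ term already carries $\bar{\mathbbm k}$, signaling that the intended reading is geometric: one passes to $\mathcal C_{\bar{\mathbbm k}}$, applies Tsen there, and the splitness hypothesis on the node guarantees that the stabilizer group and its action are unchanged under base change, so $G^{\text{ab}}$ is computed geometrically. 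The second route, if one wants a statement over $\mathbbm k$ itself, is to note that $C$ is a curve over $\mathbbm k$ and invoke the fact that $\coh^2(C,\mathbb G_m)=\Br(C)$ for a curve, which injects into $\Br$ of the function field; but this is more than we need and introduces arithmetic subtleties, so I would avoid it.

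**Concretely I would proceed as follows.** First, I invoke Theorem \ref{Brauer group}, or directly Theorem \ref{Tsen} applied to $C$, to conclude $\coh^2(C,\mathbb G_m)=0$. This is the crux and the only input beyond what has already been assembled. Second, I substitute this vanishing into the five-term exact sequence
$$
0\rightarrow\Pic(C)\rightarrow\Pic(\mathcal C)\rightarrow\Hom(G^{\text{ab}},\bar{\mathbbm k}^{\times})\rightarrow\coh^2(C,\mathbb G_m)\rightarrow\coh^2(\mathcal C,\mathbb G_m)
$$
derived just above the lemma statement. With the fourth term zero, the map out of $\Hom(G^{\text{ab}},\bar{\mathbbm k}^{\times})$ is surjective, yielding exactness of Equation \ref{Leray}. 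The injectivity on the left and exactness in the middle are already guaranteed by the five-term sequence of the Leray spectral sequence, so no additional argument is required there.

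**What is really doing the work.** Essentially all the content is the vanishing $\coh^2(C,\mathbb G_m)=0$; everything else is formal manipulation of a sequence that has already been exhibited. I would therefore keep the proof short, with the one sentence of genuine mathematics being the application of Tsen's Theorem to the coarse curve $C$ (after base-changing to $\bar{\mathbbm k}$ if one wishes to be scrupulous about the base field), and I would flag that the node being split is what ensures the stabilizer data is preserved so that the $G^{\text{ab}}$ term is correctly the geometric one.
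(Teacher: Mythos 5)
There is a genuine gap, and it is exactly at the point you yourself flagged as ``the main subtlety.'' The lemma is not a Brauer-group statement: it sits in Section \ref{doubly-twisted section}, where Convention \ref{doubly twisted conventions} only assumes $\mathcal C$ is a stacky curve over a field $\mathbbm k$, and it feeds into Theorem \ref{doubly-twisted Picard}, which is asserted over an arbitrary (tame) base field. So you must prove exactness over $\mathbbm k$, not over $\bar{\mathbbm k}$. Tsen's Theorem simply does not give $\coh^2(C,\mathbb G_m)=0$ in this generality (for instance $\coh^2(\mathbb P^1_{\mathbb Q},\mathbb G_m)=\Br(\mathbb Q)\neq 0$), and your proposed base-change repair does not close the hole: by functoriality of the Leray sequence, the vanishing of the connecting map after base change to $\bar{\mathbbm k}$ only shows that the image of the connecting map $\delta:\Hom(G^{\text{ab}},\bar{\mathbbm k}^{\times})\rightarrow\coh^2(C,\mathbb G_m)$ lands in $\ker\left(\coh^2(C,\mathbb G_m)\rightarrow\coh^2(C_{\bar{\mathbbm k}},\mathbb G_m)\right)$, and that kernel is typically nonzero (it contains the image of $\Br(\mathbbm k)$). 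Equivalently, surjectivity of $\Pic(\mathcal C_{\bar{\mathbbm k}})\rightarrow\Hom(G^{\text{ab}},\bar{\mathbbm k}^{\times})$ does not descend, because $\Pic(\mathcal C)\rightarrow\Pic(\mathcal C_{\bar{\mathbbm k}})$ need not be surjective (Galois-invariant line bundles need not be defined over $\mathbbm k$). Splitness of the node controls the stabilizer data, but it does nothing to address either of these failures.

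The paper's proof avoids the field issue entirely by proving a different statement that also kills the connecting map: it shows the pullback $\coh^2(C,\mathbb G_m)\rightarrow\coh^2(\mathcal C,\mathbb G_m)$ is \emph{injective}. The mechanism is local, not global: since $\mathcal C$ and $C$ agree on $U=C\setminus\{\pi(p)\}$, one has a commuting triangle of restriction maps through $\coh^2(U,\mathbb G_m)$; and since $\pi(p)$ is a \emph{smooth} point of $C$ (the coarse space of a doubly-twisted node is smooth, as explained in the proof of Theorem \ref{doubly-twisted Picard}), purity at that one point makes $\coh^2(C,\mathbb G_m)\rightarrow\coh^2(U,\mathbb G_m)$ injective, hence also the pullback to $\mathcal C$. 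That argument never needs $\coh^2(C,\mathbb G_m)$ to vanish and is insensitive to the base field. If you want to salvage your approach, the honest fix is essentially to reproduce this local argument; alternatively, your Tsen route does prove the lemma in the special case $\mathbbm k=\bar{\mathbbm k}$, but that is strictly weaker than what the paper needs downstream.
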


\begin{proof}
It suffices to show that $\coh^2(C,\mathbb G_m)\to \coh^2(\mathcal C,\mathbb G_m)$ is injective.
Away from the node, we have $\mathcal C\setminus\{p\}=U=C\setminus\{\pi(p)\}$.
Consider the diagram
$$
\begin{tikzcd}
\coh^2(C,\mathbb G_m)\arrow[rd]\arrow[rr] & & \coh^2(\mathcal C,\mathbb G_m)\arrow[ld]\\
 & \coh^2(U,\mathbb G_m). &
\end{tikzcd}
$$
The restriction map to $U$ is injective since the image of $p\in C$ is a smooth point (by Lemma
\ref{normalization}). Commutativity of the diagram then
forces the pullback to $\mathcal C$ to be injective as well.
\end{proof}

Next, we determine $G^{\text{ab}}$.

\begin{proposition}\label{Gab}
   The abelianization of $G$ is given by
    \[G^{\text{ab}}\cong
\begin{cases}
	\mathbb Z/2\oplus\mathbb Z/d_-, & \text{if $m$ is even}\\
	\mathbb Z/2d_-, & \text{if $m$ is odd.}
\end{cases}.\]
\end{proposition}
Note that the two groups are isomorphic when $d_-$ is odd.

\begin{proof}
We know that $G$ fits into the following exact sequence
$$
1\rightarrow\mu_n\rightarrow G\rightarrow\mathbb Z/2\rightarrow1.
$$
Denote a generator of $\mu_n$ by $t$ and a lift of the generator of $\mathbb Z/2$ to $G$ by $\sigma$.
Then $G$ has the following relations:
$$
t^n=1\quad \sigma t\sigma^{-1}=t^a\quad \sigma^2=t^m,
$$
subject to the numerical conditions from Section \ref{numerology section}.
Abelianizing $G$ adds the relation $\sigma t=t\sigma$.

If we consider the free abelian group
$\mathbb Z\left<T\right>\oplus\mathbb Z\left<\Sigma\right>\cong\mathbb Z^2$, then the above says
that $G^{\text{ab}}$ is the quotient of this group by the relations
\begin{itemize}
\item $nT=0$, from $t^n=1$ in $G$,
\item $(a-1)T=0$, from $t=\sigma t\sigma^{-1}=t^a$ in $G^{\text{ab}}$, and
\item $2\Sigma-mT=0$, from $\sigma^2=t^m$ in $G$.
\end{itemize}
That is, $G^{\text{ab}}$ is the quotient of $\mathbb Z^2$ by the $\mathbb Z$-span of the rows of
$$
\begin{pmatrix}
n & 0\\
a-1 & 0\\
-m & 2
\end{pmatrix}.
$$
We may row-reduce the first two rows so that the top left entry is $\gcd(a-1,n)$, i.e. $d_-$.
With this, $G^{\text{ab}}$ is now the quotient of $\mathbb Z^2$ by the $\mathbb Z$-span of the rows of
$$
\begin{pmatrix}
d_- & 0\\
-m & 2
\end{pmatrix}.
$$
Set $k=\gcd(d_-,m,2)$.
Putting this matrix into Smith Normal Form shows that the span is precisely
\[
\mathbb Z/k\oplus\mathbb Z/(2d_-/k)=
\begin{cases}
\mathbb Z/2\oplus\mathbb Z/d_-, & \text{if $2$ divides both $d_-$ and $m$}\\
\mathbb Z/(2d_-), & \text{else}.
\end{cases}
\]
This is equivalent to the statement given in the proposition after
noticing that both groups are isomorphic when $d_-$ is odd.
\end{proof}

Because $\fieldchar\mathbbm k$ is coprime to $2n$,
we have $\Hom(G^{\text{ab}},\bar{\mathbbm k}^\times)\cong G^{\text{ab}}$,
and then $\Pic\mathcal C$ is an extension of a group of size $2d_-$ by $\Pic C$
by Lemma~\ref{H2 injectivity}. 

\begin{definition}
Define the sheaf $Q$ on $\mathcal C$ as the following quotient:
$$
0\rightarrow\mathcal O_{\mathcal C}^{\times}\rightarrow\eta_*\mathcal O_{\tilde{\mathcal C}}^{\times}\rightarrow
Q\rightarrow 0.
$$
\end{definition}
Since pushforward along a finite morphism is exact in the \'{e}tale topology,
the above short exact sequence gives the long exact sequence
$$
0\rightarrow\mathcal O_{\mathcal C}(\mathcal C)^{\times}\rightarrow\mathcal O_{\tilde{\mathcal C}}(\tilde{\mathcal C})^{\times}
\rightarrow \coh^0(\mathcal C,Q)\rightarrow
\Pic\mathcal C\rightarrow\Pic\tilde{\mathcal C}.
$$

The \'etale local description of a doubly-twisted node shows that $\tilde{\mathcal C}$ is still geometrically connected, that is, a doubly-twisted node is
nonseparating. Then, because $\mathcal C$ and $\tilde{\mathcal C}$ are both geometrically connected and proper, we have
$\mathcal O_{\mathcal C}(\mathcal C)^{\times}=\mathcal O_{\tilde{\mathcal C}}(\tilde{\mathcal C})^{\times}=\mathbbm k^{\times}$.
Therefore we have the following exact sequence:
\begin{equation}\label{exact sequence}
0
\rightarrow \coh^0(\mathcal C,Q)\rightarrow
\Pic \mathcal C\xrightarrow{\eta^*}\Pic\tilde{\mathcal C}.
\end{equation}

We now begin an analysis of the terms of Equation \eqref{exact sequence}.

\begin{proposition}\label{Q structure}
Let $\mathbbm k^{\times}_{\text{inv}}$ denote the sheaf on $BG$ associated
to the $G$-module whose underlying group is $\mathbbm k^\times$ and whose action inverts the elements of $\mathbbm k^\times$, corresponding to $G\to \mathbb Z/2\to \Aut({\mathbbm k}^\times)$. Then $Q\cong p_* \mathbbm k^\times_{\text{inv}}.$
\end{proposition}

\begin{proof}
By definition $Q(U)$ is the sheafification of the assignment
$$
U\mapsto\frac{(\eta_*\mathbb G_m)(U)}{\mathbb G_m(U)}=\frac{\mathbb G_m(\tilde U)}{\mathbb G_m(U)}.
$$
This is now reduced to the level of curves, where we know that this quotient is, Zariski locally, just
$(\mathbbm k^{\times})^m$ where $m$ is the number of preimages of the node. But the assignment
$U\mapsto(\mathbbm k^{\times})^m$ is a familiar sheaf: namely, it is the pushforward of
$\mathbbm k^{\times}$ from the residual gerbe at the node, as desired.

Now we must determine the action of $G$. We note that the $\mathbbm k^{\times}$
appearing in $(\mathbbm k^{\times})^m$ is actually
$$
\frac{\mathbbm k^{\times}\times\mathbbm k^{\times}}{\mathbbm k^{\times}}
$$
where the quotient is by the diagonal $(t,t)$. Since the branch-preserving part of $G$ acts
trivially on $\mathbbm k^{\times}$, it acts
trivially on the quotient.
However, the branch-swapping part of $G$
does \textit{not} act trivially. More specifically,
the $\mathbb Z/2$ interchanges the two copies of $\mathbbm k^{\times}$
and hence acts by inversion on the quotient, as claimed.
\end{proof}

\begin{corollary}
The global sections of $Q$ are $\coh^0(\mathcal C,Q)=\mu_2$.
\end{corollary}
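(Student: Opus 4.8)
The plan is to leverage Proposition \ref{Q structure}, which identifies $Q$ with $p_*\mathbbm k^\times_{\text{inv}}$, and thereby reduce the computation of global sections to a purely group-theoretic invariant calculation on the residual gerbe $BG$. Since $p:BG\to\mathcal C$ is the inclusion of the residual gerbe at the node, taking global sections over $\mathcal C$ of a pushforward along $p$ simply recovers global sections over $BG$: by the very definition of pushforward, $\Gamma(\mathcal C,-)\circ p_*=\Gamma(BG,-)$, so no spectral sequence is needed at the level of $\coh^0$. Therefore I would write
$$
\coh^0(\mathcal C,Q)=\coh^0\bigl(\mathcal C,p_*\mathbbm k^\times_{\text{inv}}\bigr)=\coh^0\bigl(BG,\mathbbm k^\times_{\text{inv}}\bigr).
$$

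Next I would use the standard dictionary between abelian sheaves on $BG$ and $G$-modules, under which global sections correspond to $G$-invariants. Thus $\coh^0(BG,\mathbbm k^\times_{\text{inv}})=(\mathbbm k^\times)^G$, where $G$ acts on $\mathbbm k^\times$ through the inversion action described in Proposition \ref{Q structure}. The key point is that this action factors through the branch-swapping quotient $G\to\mathbb Z/2$: the branch-preserving subgroup $\mu_n$ acts trivially, while the generator of $\mathbb Z/2$ acts by $t\mapsto t^{-1}$. Consequently the $G$-invariants coincide with the $\mathbb Z/2$-invariants, namely
$$
(\mathbbm k^\times)^G=(\mathbbm k^\times)^{\mathbb Z/2}=\{t\in\mathbbm k^\times : t=t^{-1}\}=\{t\in\mathbbm k^\times : t^2=1\}=\mu_2,
$$
where the last identification uses that $\fieldchar\mathbbm k\neq 2$ in the doubly-twisted setting (Section \ref{conventions}), so that $\mu_2=\{\pm1\}\cong\mathbb Z/2$.

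I expect no serious obstacle here; the content of the corollary is essentially bookkeeping once Proposition \ref{Q structure} is in hand. The only point requiring mild care is the justification that global sections of the pushforward from the residual gerbe are computed by $G$-invariants rather than something more elaborate—but this is exactly the $\coh^0$ part of the equivalence of categories between $G$-modules and sheaves on $BG$, and it does not require the field to be algebraically closed. The conclusion $\coh^0(\mathcal C,Q)=\mu_2$ then follows immediately, and this is precisely the group that will sit at the left of the exact sequence \eqref{exact sequence} when determining the extension computing $\Pic\mathcal C$.
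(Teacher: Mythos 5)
Your proof is correct and follows essentially the same route as the paper: identify $Q$ with $p_*\mathbbm k^\times_{\text{inv}}$ via Proposition \ref{Q structure}, compute global sections on $BG$ as $G$-invariants, and conclude $(\mathbbm k^\times)^G=\mu_2$ using that tameness excludes characteristic $2$. Your remark that no exactness of $p_*$ is needed at the level of $\coh^0$ (the paper invokes exactness of pushforward along a closed immersion, which is only genuinely needed for higher cohomology) is a fair, slightly sharper observation, but the substance of the argument is identical.
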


\begin{proof}
Exactness of pushforward under a closed immersion together with Proposition \ref{Q structure} implies that
$$
\coh^0(\mathcal C,Q)=\coh^0(BG,\mathbbm k^{\times}_{\text{inv}})=
\coh^0(G,\mathbbm k^{\times}_{\text{inv}})=\mu_2.
$$
Where the last equality holds since inversion fixes $\mu_2$. 
\end{proof}

Hence, our exact sequence is 
\begin{equation}\label{final exact sequence}
0\rightarrow\mu_2\rightarrow\Pic\mathcal C\xrightarrow{\eta^*}
\Pic\tilde{\mathcal C}.
\end{equation}
\begin{theorem}\label{doubly-twisted Picard}
Let $\mathcal C$ be a stacky curve with coarse moduli space $C$.
Assume that $\mathcal C$ has a doubly-twisted node $p$.
Let $C$ be its coarsening at $p$.
Then 
\[\Pic(\mathcal C)=\begin{cases}
    \mu_2\oplus \Pic(C)\left<\sqrt[d_-]{\mathcal O(p)}\right>, &
    \text{if $m$ is even} \\
    \Pic(C)\left<\sqrt[2d_-]{\mathcal O(2p)}\right>, & \text{if $m$ is odd.}
\end{cases}\]
\end{theorem}
We note that, as in Proposition~\ref{Gab}, there is overlap in
these cases when $d_-$ is odd: multiplying a generator of $\mu_2$ with
$\sqrt[d_-]{\mathcal O(p)}$ gives a $2d_-$-th root of $\mathcal O(2p)$ if $d_-$ is
odd, and so these descriptions are equivalent in this case.
\begin{proof}
    We have a commutative diagram
    \begin{center}
        \begin{tikzcd}
            & \Pic(\tilde{\mathcal C})\\
            \Pic(C)\arrow[ru] \arrow[rd] & \\
            & \Pic(\mathcal C)\arrow[uu].
        \end{tikzcd}
    \end{center}
    This induces a diagram
    \begin{center}
        \begin{tikzcd}
            0\arrow[r]& \Pic(C)\arrow[r] & \Pic(\tilde{\mathcal C})\arrow[r] & \mathbb Z/n \arrow[r] & 0\\
            0 \arrow[r] & \Pic(C) \arrow[r]\arrow[u,"\id"] & \Pic(\mathcal C)\arrow[r]\arrow[u, "\eta^*"] & \Hom(G^{\text{ab}},\bar{\mathbbm k}^\times) \arrow[r]\arrow[u] & 0,
        \end{tikzcd}
    \end{center}
    as those terms give the cokernels of these maps by \cite[Theorem 1.1]{Lop23} and Lemma~\ref{H2 injectivity}. The kernel of $\eta^*$ is $\mu_2$ by our exact sequence in Equation~\eqref{final exact sequence},
    and so the kernel of the right map is $\mu_2$ as well.
    We know that $\Hom(G^{\text{ab}},\bar{\mathbbm k}^\times)$ has size $2d_-$ because $\Hom(G^{\text{ab}},\bar{\mathbbm k}^\times)\cong G^{\text{ab}}$ and Proposition~\ref{Gab} says $G^{\text{ab}}$ has size $2d_-$. Then the image in $\mathbb Z/n$ has size $d_-$ by exactness.

    There is a unique subgroup of $\mathbb Z/n$ of size $d_-$, which then must be the image of $\Hom(G^{\text{ab}},\bar{\mathbbm k}^\times) \to \mathbb Z/n.$ Pulling this back to 
    \[\Pic\mathcal C\to \Pic\tilde{\mathcal C}=\Pic(C)\left\langle \sqrt[n]{\mathcal O(p)}\right\rangle,\]
    we see that its image is $\Pic(C)\left\langle \sqrt[d_-]{\mathcal O(p)}\right\rangle.$ And so we have an exact sequence
    \begin{equation}\label{doubly-twisted final exact}
        0\to \mu_2\to \Pic\mathcal C\to \Pic(C)\left\langle \sqrt[d_-]{\mathcal O(p)}\right\rangle\to 0.
    \end{equation}
    By the diagram, a splitting of this exact sequence is induced by a splitting of 
    \[0\to \mu_2\to \Hom(G^{\text{ab}},\bar{\mathbbm k}^\times)\to \mathbb Z/d_-\to 0.\]
    If $\Hom(G^{\text{ab}},\bar{\mathbbm k}^\times)\cong \mathbb Z/2\oplus \mathbb Z/d_-$, then this sequence must split, giving the description of
    $\Pic\mathcal C$ in the proposition. Otherwise, \eqref{doubly-twisted final exact} does not split. In this case, we can take a lift $\mathscr L\in \Pic\mathcal C$ of $\sqrt[d_-]{\mathcal O(p)},$ then $\mathscr L^{\otimes d_-}$ is equal to $\mathcal O(p)$ up to an element of $\mu_2$. Here we view $\mathcal O(p)$ as a line bundle on $\mathcal C$ via pullback from $C$.
    
    We cannot have $\mathscr L^{\otimes d_-}=\mathcal O(p)$ on the dot, because this implies the above exact sequence is split. But squaring will kill the nonzero part coming from $\mu_2$, giving $\mathscr L^{\otimes 2d_-}=\mathcal O(2p)$. This gives the description of $\Pic(\mathcal C)$ from the statement of the proposition.
\end{proof}

\begin{example}
From Theorem \ref{doubly-twisted Picard},
any choice of an action with $a=1$ leads to the largest possible Picard group for a nodal curve.
We consider the interesting case of $\mathbb Z/2n$: for a fixed primitive $2n^{\text{th}}$ root of unity $\zeta_{2n}$,
let $G=\mathbb Z/2n$ act as $1\cdot(x,y)=(\zeta_{2n}y,\zeta_{2n}x)$. Then this is a
doubly-twisted node whose geometric stabilizer is cyclic (yet not twisted), and
$$
\Pic\mathcal C=\Pic C\left<\sqrt[2n]{\mathcal O(2p)}\right>,
$$
because $m=1$ is odd and $d_-=n$.
\end{example}

\subsection{The Brauer group of $\mathcal C$}
In this section, all group actions on $\mathbbm k^{\times}$ are taken to be trivial.
We also assume that $\mathbbm k$ is algebraically closed.
Recall that $G$ is an extension of $\mathbb Z/2$ by $\mu_n$:
$$
1\rightarrow\mu_n\rightarrow G\rightarrow\mathbb Z/2\rightarrow1.
$$
We will use the Lyndon-Hochschild-Serre spectral sequence
$$
E_2^{p,q}=\coh^p(\mathbb Z/2,\coh^q(\mu_n,\mathbbm k^{\times}))\implies
\coh^{p+q}(G,\mathbbm k^{\times})
$$
to compute $\coh^2(G,\mathbbm k^{\times}).$

\begin{lemma}\label{kernel}
We have
$$
\coh^2(G,\mathbbm k^{\times})=\ker\left(d_2^{1,1}:\coh^1(\mathbb Z/2,\coh^1(\mu_n,\mathbbm k^{\times}))
\rightarrow\coh^3(\mathbb Z/2,\mathbbm k^{\times})\right).
$$
\end{lemma}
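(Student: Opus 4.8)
The plan is to extract $\coh^2(G,\mathbbm k^{\times})$ from the given Lyndon--Hochschild--Serre spectral sequence by showing that the entire antidiagonal $p+q=2$ on the $E_2$ page collapses onto the single entry $E_2^{1,1}$, and then computing the surviving subquotient of that entry. First I would record the three relevant $E_2$ terms, namely $E_2^{2,0}=\coh^2(\mathbb Z/2,\coh^0(\mu_n,\mathbbm k^{\times}))$, $E_2^{1,1}=\coh^1(\mathbb Z/2,\coh^1(\mu_n,\mathbbm k^{\times}))$, and $E_2^{0,2}=\coh^0(\mathbb Z/2,\coh^2(\mu_n,\mathbbm k^{\times}))$.

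Next I would compute the inner cohomology groups $\coh^q(\mu_n,\mathbbm k^{\times})$ using Proposition \ref{group coho}. Since $\mathbbm k$ is algebraically closed, $\mathbbm k^{\times}$ is divisible, so multiplication by $n$ is surjective; hence $\coh^q(\mu_n,\mathbbm k^{\times})=0$ for every even $q\geq 2$, while $\coh^0(\mu_n,\mathbbm k^{\times})=\mathbbm k^{\times}$ and $\coh^1(\mu_n,\mathbbm k^{\times})=\Hom(\mu_n,\mathbbm k^{\times})$. In particular $\coh^2(\mu_n,\mathbbm k^{\times})=0$, which forces $E_2^{0,2}=0$. Applying the same divisibility argument to $\mathbb Z/2$ acting trivially on $\mathbbm k^{\times}$, Proposition \ref{group coho} gives $\coh^2(\mathbb Z/2,\mathbbm k^{\times})=0$, so $E_2^{2,0}=0$ as well. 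Thus the only possibly nonzero term on the antidiagonal is $E_2^{1,1}=\coh^1(\mathbb Z/2,\coh^1(\mu_n,\mathbbm k^{\times}))$; since the abutment filtration of $\coh^2(G,\mathbbm k^{\times})$ has associated graded pieces $E_\infty^{2,0}$, $E_\infty^{1,1}$, $E_\infty^{0,2}$ with the outer two vanishing (being subquotients of vanishing $E_2$ terms), we conclude $\coh^2(G,\mathbbm k^{\times})=E_\infty^{1,1}$.

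It then remains to compute $E_\infty^{1,1}$ as a subquotient of $E_2^{1,1}$. The incoming differential $d_2\colon E_2^{-1,2}\to E_2^{1,1}$ vanishes because its source has negative first index, and all higher incoming differentials vanish for the same reason. The only potentially nonzero outgoing differential is $d_2^{1,1}\colon E_2^{1,1}\to E_2^{3,0}=\coh^3(\mathbb Z/2,\coh^0(\mu_n,\mathbbm k^{\times}))=\coh^3(\mathbb Z/2,\mathbbm k^{\times})$; every later outgoing differential $d_r^{1,1}$ with $r\geq 3$ lands in a term with negative second index and is therefore zero. Hence $E_\infty^{1,1}=E_3^{1,1}=\ker(d_2^{1,1})$, which is exactly the claimed description.

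The bookkeeping is routine; the only point requiring genuine care is the systematic use of the divisibility of $\mathbbm k^{\times}$ to kill both the even-degree inner terms and the term $E_2^{2,0}$, since without the algebraically closed hypothesis $\coh^2(\mathbb Z/2,\mathbbm k^{\times})$ need not vanish and the antidiagonal would fail to collapse. The actual evaluation of $\coh^1(\mathbb Z/2,\coh^1(\mu_n,\mathbbm k^{\times}))$ and of the map $d_2^{1,1}$ is deferred, as this lemma only isolates the kernel.
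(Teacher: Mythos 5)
Your proposal is correct and follows essentially the same route as the paper: both arguments kill $E_2^{2,0}$ and $E_2^{0,2}$ via divisibility of $\mathbbm k^{\times}$ (Proposition \ref{group coho}) and then identify $\coh^2(G,\mathbbm k^{\times})$ with $E_\infty^{1,1}=\ker(d_2^{1,1})$. The only cosmetic difference is that the paper invokes the exact sequence of low-degree terms at this point, while you unpack the same degeneration by tracking the abutment filtration and the vanishing of the incoming and higher outgoing differentials by hand.
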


\begin{proof}
The $E_2^{2,0}$ and $E_2^{0,2}$ terms are
$$
E_2^{2,0}=\coh^2(\mathbb Z/2,\mathbbm k^{\times})=\mathbbm k^{\times}/(\mathbbm k^{\times})^2=0
$$
and
$$
E_2^{0,2}=\coh^0(\mathbb Z/2,\coh^2(\mu_n,\mathbbm k^{\times}))=\coh^0(\mathbb Z/2,0)=0.
$$
Therefore the end of the sequence of low-degree terms reads
$$
0\rightarrow\coh^2(G,\mathbbm k^{\times})\rightarrow\coh^1(\mathbb Z/2,\coh^1(\mu_n,\mathbbm k^{\times}))
\xrightarrow{d_2^{1,1}}\coh^3(\mathbb Z/2,\mathbbm k^{\times}),
$$
and so
$$
\coh^2(G,\mathbbm k^{\times})=\ker(d_2^{1,1}:E_2^{1,1}\rightarrow E_2^{3,0})
$$
as desired.
\end{proof}

\begin{lemma}\label{E_2^{1,1}}
The $E_2^{1,1}$ term is
$$
\coh^1(\mathbb Z/2,\coh^1(\mu_n,\mathbbm k^{\times}))=\frac{\mathbb Z/d_+}{\mathbb Z/(n/d_-)},
$$
which is either trivial if $d_-d_+=n$ or $\mathbb Z/2$ if $d_-d_+=2n$.
The generator of this group is $n_+$.
\end{lemma}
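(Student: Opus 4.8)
The plan is to compute the inner cohomology group $\coh^1(\mu_n,\mathbbm k^{\times})$ together with the $\mathbb Z/2$-module structure it inherits in the Lyndon--Hochschild--Serre spectral sequence, and then feed the answer into the cyclic-group formula of Proposition \ref{group coho} applied to $\mathbb Z/2$. Since $\mu_n$ acts trivially on $\mathbbm k^{\times}$ and $\mathbbm k$ is algebraically closed (so multiplication by $n$ is surjective on $\mathbbm k^{\times}$), Proposition \ref{group coho} gives $\coh^1(\mu_n,\mathbbm k^{\times})=\Hom(\mu_n,\mathbbm k^{\times})\cong\mathbb Z/n$, where I fix a generator by choosing a primitive character $\chi_0$. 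The crucial point is the $\mathbb Z/2$-action: it is induced by conjugation, so a lift $\sigma$ of the generator sends a character $\chi$ to $(\sigma\cdot\chi)(t)=\chi(\sigma^{-1}t\sigma)$. Using $\sigma t\sigma^{-1}=t^a$ together with $a^2\equiv1\bmod n$ (hence $a^{-1}\equiv a$), I get $\sigma^{-1}t\sigma=t^a$, so $(\sigma\cdot\chi)(t)=\chi(t)^a$; under the identification with $\mathbb Z/n$ this is multiplication by $a$.

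Next I would apply Proposition \ref{group coho} to $\mathbb Z/2=\langle x\rangle$ with module $M=\mathbb Z/n$ on which $x$ acts by multiplication by $a$, so the norm is $N=1+a$ and $x-1=a-1$. The odd-degree formula then yields
$$
\coh^1(\mathbb Z/2,\mathbb Z/n)=\frac{\ker(\cdot(1+a))}{(a-1)\mathbb Z/n}.
$$
I would then identify these two subgroups of $\mathbb Z/n$ explicitly: the kernel of multiplication by $1+a$ is the unique cyclic subgroup of order $d_+$, generated by $n/d_+$, while the image of multiplication by $a-1$ is $d_-\mathbb Z/n$, cyclic of order $n/d_-$. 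This is exactly the numerator $\mathbb Z/d_+$ and denominator $\mathbb Z/(n/d_-)$ in the statement, and it pins down $n/d_+$ as the generator.

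Finally, to make the quotient meaningful I verify the containment $(a-1)\mathbb Z/n\subseteq\ker(\cdot(1+a))$, which is immediate from $(1+a)(a-1)=a^2-1\equiv0\bmod n$. The order of the quotient is therefore $d_+/(n/d_-)=d_-d_+/n$, which by Proposition \ref{d_-d_+} equals $1$ when $d_-d_+=n$ and $2$ when $d_-d_+=2n$, giving the trivial group and $\mathbb Z/2$ respectively.

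I expect the main obstacle to be the first step: correctly identifying the $\mathbb Z/2$-action on $\coh^1(\mu_n,\mathbbm k^{\times})$ as multiplication by $a$. One must be careful that it is the conjugation action on $\mu_n$, not the (trivial) action on the coefficients, that produces something nontrivial, and that the simplification $a^{-1}\equiv a$ is what keeps the exponent equal to $a$ rather than $a^{-1}$. Once this module structure is correct, the remaining identifications of the kernel and image inside $\mathbb Z/n$ are routine divisibility computations.
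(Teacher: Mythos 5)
Your proposal is correct and follows essentially the same route as the paper's proof: identify $\coh^1(\mu_n,\mathbbm k^{\times})\cong\mathbb Z/n$ with $\mathbb Z/2$ acting by multiplication by $a$, then apply the cyclic-group cohomology formula of Proposition \ref{group coho} to get $\ker(\cdot(1+a))/(a-1)\mathbb Z/n=\frac{\mathbb Z/d_+}{\mathbb Z/(n/d_-)}$ with generator $n/d_+$. The only difference is that you spell out details the paper leaves implicit, notably the verification via conjugation (using $a^{-1}\equiv a$) that the induced action really is multiplication by $a$, and the order count via Proposition \ref{d_-d_+}.
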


\begin{proof}
The formulas in Proposition \ref{group coho} tell us that
$$
\coh^1(\mathbb Z/2,\coh^1(\mu_n,\mathbbm k^{\times}))=
\coh^1(\mathbb Z/2,\mathbb Z/n)
$$
where the action of $\mathbb Z/2$ on $\mathbb Z/n$ is multiplication by $a$.
Further application of these formulas gives
$$
\coh^1(\mathbb Z/2,\mathbb Z/n)=\frac{\{k\in\mathbb Z/n:(a+1)k=0\}}{(a-1)\mathbb Z/n}=
\frac{\mathbb Z/d_+}{\mathbb Z/(n/d_-)}.
$$
This group is either trivial if $d_-d_+=n$ or $\mathbb Z/2$ if $d_-d_+=2n$, and its generator in either case
is the element $n_+$.
\end{proof}

\begin{note}\label{identification}
In Lemma \ref{E_2^{1,1}} we identified
$$
\coh^1(\mu_n,\mathbbm k^{\times})=\Hom(\mu_n,\mathbbm k^{\times})=\mathbb Z/n.
$$
Unravelling this identification to write in terms of $\Hom(\mu_n,\mathbbm k^{\times})$,
which will be necessary in the proof of Theorem \ref{H2 doubly twisted}, we see that the generator
is
$$
\left[\varphi:\zeta_n\mapsto\zeta_n^{n_+}\right].
$$
\end{note}

\begin{proposition}\label{H2 doubly twisted}
Suppose that $\mathcal C$ has a doubly-twisted node over an algebraically closed field.
The contribution of the node to
the Brauer group of $\mathcal C$ is
$$
\coh^2(G,\mathbbm k^{\times}_{\text{triv}})=
\begin{cases}
0 & \text{if } d_-d_+=n\text{ or $G$ is non-split}\\
\mathbb Z/2 & \text{if } d_-d_+=2n\text{ and $G$ is split.}
\end{cases}
$$
\end{proposition}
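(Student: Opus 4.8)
The plan is to evaluate the kernel appearing in Lemma \ref{kernel} one case at a time. By Lemma \ref{E_2^{1,1}}, when $d_-d_+=n$ the group $E_2^{1,1}$ already vanishes, so $\coh^2(G,\mathbbm k^\times)=\ker d_2^{1,1}=0$ with no further work, settling the first branch. All of the content therefore lies in the case $d_-d_+=2n$, where $E_2^{1,1}\cong\mathbb Z/2$ and I must decide whether the differential $d_2^{1,1}\colon E_2^{1,1}\to\coh^3(\mathbb Z/2,\mathbbm k^\times)$ is zero, giving $\coh^2(G,\mathbbm k^\times)=\mathbb Z/2$, or injective, giving $\coh^2(G,\mathbbm k^\times)=0$. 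Here $\coh^3(\mathbb Z/2,\mathbbm k^\times)=\mu_2$ by Proposition \ref{group coho}, so this is genuinely a map $\mathbb Z/2\to\mathbb Z/2$ and these are the only two possibilities.

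For the split case I would show $d_2^{1,1}=0$ directly from the splitting. A section $s\colon\mathbb Z/2\to G$ makes the composite of inflation and restriction $\coh^\bullet(\mathbb Z/2,\mathbbm k^\times)\to\coh^\bullet(G,\mathbbm k^\times)\to\coh^\bullet(\mathbb Z/2,\mathbbm k^\times)$ equal to the identity, so inflation is injective. Since inflation is the edge map $E_2^{\bullet,0}\twoheadrightarrow E_\infty^{\bullet,0}\hookrightarrow\coh^\bullet(G,\mathbbm k^\times)$, its injectivity forces $E_\infty^{3,0}=E_2^{3,0}$, so every differential landing in the bottom row in total degree $3$ must vanish; in particular $d_2^{1,1}=0$. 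Thus $\ker d_2^{1,1}=E_2^{1,1}=\mathbb Z/2$ and $\coh^2(G,\mathbbm k^\times)=\mathbb Z/2$. Equivalently, and more uniformly, $d_2^{1,1}$ is (up to sign) cup product with the extension class $e\in\coh^2(\mathbb Z/2,\mu_n)$, and $G$ is split precisely when $e=0$ by Proposition \ref{m prop}, making the differential zero for free.

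The remaining and hardest case is $d_-d_+=2n$ with $G$ non-split, where I must exhibit $d_2^{1,1}\neq0$. I would use that $d_2^{1,1}(\xi)=\pm\,\xi\cup e$, where $e$ is represented by $m$ (Proposition \ref{m prop}) and the pairing is evaluation $\Hom(\mu_n,\mathbbm k^\times)\otimes\mu_n\to\mathbbm k^\times$, and take $\xi$ to be the generator $\varphi\colon\zeta_n\mapsto\zeta_n^{n/d_+}$ recorded in Note \ref{identification}. At the cochain level, with the section $s(\sigma)=\sigma$, the normalized $2$-cocycle for $e$ is supported on $(\sigma,\sigma)$ with value $\zeta_n^m$, so the cup-product $3$-cocycle is supported only on $(\sigma,\sigma,\sigma)$, where its value is $\varphi(\sigma\cdot\zeta_n^m)=\varphi(\zeta_n^{am})$. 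Writing $m=(n/d_-)m'$, which is legitimate because $e$ lives in $\ker(1-a)/(1+a)\mu_n=(n/d_-)\mathbb Z/n\,/\,d_+\mathbb Z/n$ with $d_+=2(n/d_-)$ by $d_-d_+=2n$, this value equals $\zeta_n^{am'n/2}=(-1)^{am'}=(-1)^{m'}$, using that $a$ is odd since $n$ is even. As $G$ is non-split exactly when $d_+\nmid m$, i.e. when $m'$ is odd, the cup product is $-1$, the nonzero element of $\coh^3(\mathbb Z/2,\mathbbm k^\times)=\mu_2$. Hence $d_2^{1,1}$ is injective and $\coh^2(G,\mathbbm k^\times)=0$.

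The main obstacle is this final computation: justifying that $d_2^{1,1}$ is, up to sign, cup product with the extension class, and then correctly matching the resulting $3$-cocycle with an element of $\coh^3(\mathbb Z/2,\mathbbm k^\times)=\mu_2$ under the cyclic-cohomology description of Proposition \ref{group coho}. Both steps demand care with the $\mathbb Z/2$-action on $\Hom(\mu_n,\mathbbm k^\times)$ (where $\sigma$ acts through $a$, and $a^2\equiv1$ is precisely what makes the evaluation pairing equivariant) and with the normalization of the comparison between the bar and periodic resolutions; it is here that the explicit generator from Note \ref{identification} earns its keep. Everything else reduces to bookkeeping with the numerology $d_-d_+=2n$ and $\gcd(d_-,d_+)=2$ established in Proposition \ref{d_-d_+}.
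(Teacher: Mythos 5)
Your proposal is correct and follows essentially the same route as the paper: reduce to $\ker d_2^{1,1}$ via Lemmas \ref{kernel} and \ref{E_2^{1,1}}, dispose of the case $d_-d_+=n$ immediately, and then compute $d_2^{1,1}$ on the generator $\varphi$ as evaluation against the extension class, so that vanishing is equivalent to $d_+\mid m$ (Proposition \ref{m prop}). The one fact you flag as the main obstacle --- that $d_2^{1,1}$ is, up to sign, cup product with the extension class --- is precisely what the paper invokes as \cite[Theorem 4]{HS53}, so it can simply be cited rather than re-derived at the cochain level; your splitting/inflation argument in the split case is a clean alternative to the paper's uniform use of that formula.
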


\begin{proof}
By Lemmas \ref{kernel} and \ref{E_2^{1,1}}, we may assume that $d_-d_+=2n$, else
$E_2^{1,1}$ and hence $\coh^2(G,\mathbbm k^{\times})$ is trivial.

By Note \ref{identification}, the generator of
$$
\coh^1(\mathbb Z/2,\coh^1(\mu_n,\mathbbm k^{\times}))
$$
is the element
$$
\left[\varphi:\zeta_n\mapsto\zeta_n^{n_+}\right].
$$

By Proposition~\ref{m prop}, the class of the extension $[G]$ is determined by $m$.
By \cite[Theorem 4]{HS53}, for the element
$\varphi\in\coh^1(\mathbb Z/2,\coh^1(\mu_n,\mathbbm k^{\times}))$,
we have
$$
d_2^{1,1}(\varphi)=\varphi(\zeta_n^{-m})\in\mu_2=\Hom(\mathbb Z/2,\mathbbm k^{\times})
=\coh^3(\mathbb Z/2,\mathbbm k^{\times}).
$$
This shows that if the
extension is split, then $d_2^{1,1}$ is trivial and hence
$$\coh^2(G,\mathbbm k^{\times})=
\coh^1(\mathbb Z/2,\mathbb Z/n)=\mathbb Z/2.
$$

In the non-split case, we consider
$$
d_2^{1,1}(\varphi)=\varphi(\zeta_n^{-m})=\zeta_n^{-mn_+}.
$$
Note that this is equal to 1 if and only if $d_+\mid m$, which is precisely the criterion
for whether or not the extension is trivial from Proposition \ref{m prop}.
Hence $d_2^{1,1}(\varphi)\neq 1$ when $d_-d_+=2n$ and the extension is nontrivial.
We conclude that $d_2^{1,1}$ is injective and $\coh^2(G,\mathbbm k^{\times})=0$.
\end{proof}

\begin{example}
Theorem \ref{H2 doubly twisted} now provides the example promised in \cite[Example 3.9]{Bis25}.
Pick any split extension $G=\mu_n\rtimes\mathbb Z/2$ with $a$ chosen so that
$d_-d_+=2n$. For $\mathbbm k$ algebraically closed, set
$$
\mathcal C=\left[\frac{\Spec\mathbbm k[x,y]/(xy)}G\right]
$$
where $G$ acts as $(\zeta_n,\zeta_n^a)$ from $\mu_n$ and swaps branches from $\mathbb Z/2$.
Then
$$
\coh^2(\mathcal C,\mathbb G_m)=\coh^2(G,\mathbbm k^{\times})=\mathbb Z/2.
$$
For an example of such an $a$, we can pick $a=-1$. Hence $G=D_n$, and so
$\mathcal C$ is a balanced doubly-twisted curve (or is of dihedral type).
Any such $a$ and $G$ provide, from the perspective of complexity of the singularity,
the simplest possible example of
a tame stacky curve over an algebraically closed field with nontrivial Brauer group.
\end{example}

\section{Twisted nodes}\label{twisted node section}
\begin{convention}\label{twisted alg closed}
In this section we fix a stacky nodal curve $\mathcal C$ with a
twisted node, $p$. Denote the geometric stabilizer at $p$, which is $\mu_n$, by $G$.
Let $\pi:\mathcal C\rightarrow C$ be the coarsening at $p$, let
$\eta:\tilde{\mathcal C}\rightarrow\mathcal C$ be the partial normalization
of $\mathcal C$ at $p$, and let $\tilde C$ be the
coarsening of $\tilde{\mathcal C}$ at the preimages of the node
(which is also the partial normalization of $C$).
Lastly, recall that we have set
$d_+=\gcd(a+1,n)$, $d_-=\gcd(a-1,n)$, and $n_+=n/d_+$.
\end{convention}

\begin{lemma}\label{twisted Leray}
The below sequence is exact:
$$
0\rightarrow\Pic C\rightarrow\Pic\mathcal C
\rightarrow\mathbb \Pic B\mu_n\rightarrow0.
$$
\end{lemma}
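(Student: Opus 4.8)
The plan is to run the same Leray-spectral-sequence argument that was used in the doubly-twisted case, now for a twisted node, where $G=\mu_n$ is cyclic and branch-preserving. Starting from the coarsening $\pi:\mathcal C\to C$ and using $\pi_*\mathbb G_m=\mathbb G_m$, the five-term exact sequence of low-degree terms reads
\[
0\to\Pic C\to\Pic\mathcal C\to\coh^0(C,\mathbf R^1\pi_*\mathbb G_m)\to\coh^2(C,\mathbb G_m)\to\coh^2(\mathcal C,\mathbb G_m).
\]
By Corollary \ref{Meier stalk}, $\mathbf R^1\pi_*\mathbb G_m$ is supported at $p$ with stalk $\coh^1(G,\bar{\mathbbm k}^{\times}_{\text{triv}})=\Hom(\mu_n,\bar{\mathbbm k}^{\times})\cong\mathbb Z/n$, where the last identification uses that $\mathbbm k$ (or rather its separable closure) contains the $n$-th roots of unity under the tameness hypothesis. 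So the middle term is exactly $\mathbb Z/n$, and the claimed sequence will follow once I show the connecting map $\coh^2(C,\mathbb G_m)\to\coh^2(\mathcal C,\mathbb G_m)$ is injective, forcing the map $\Pic\mathcal C\to\mathbb Z/n$ to be surjective.

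First I would reduce the exactness on the left and in the middle to a formal consequence of the spectral sequence — these are automatic — and isolate the only real content, namely surjectivity of $\Pic\mathcal C\to\mathbb Z/n$. Second, I would establish the injectivity of $\coh^2(C,\mathbb G_m)\to\coh^2(\mathcal C,\mathbb G_m)$ by exactly the argument given for Lemma \ref{H2 injectivity}: away from the node $\mathcal C$ and $C$ agree on the common open $U=C\setminus\{\pi(p)\}$, and since the node is a smooth point of the coarse space $C$ the restriction $\coh^2(C,\mathbb G_m)\to\coh^2(U,\mathbb G_m)$ is injective; the commutative triangle over $U$ then forces the pullback to $\mathcal C$ to be injective. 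This is the step I expect to be the main obstacle, since it requires knowing that $\pi(p)$ is a smooth point of $C$ — but that is exactly the geometric input recorded in the twisted-node setting (the coarsening of a node with cyclic branch-preserving stabilizer is again nodal, and the point is smooth on the branch-level coarse space), and the proof of Theorem \ref{twisted Picard} will supply the justification, so I can cite it just as Lemma \ref{H2 injectivity} does.

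With injectivity in hand, the connecting map is zero, the five-term sequence truncates to the claimed three-term sequence, and exactness at $\mathbb Z/n$ gives surjectivity for free. I would close by remarking that this sequence is the twisted-node analogue of Lemma \ref{H2 injectivity}; the only difference from the doubly-twisted case is that here $G^{\text{ab}}=G=\mu_n$ (no branch-swapping, hence no $\mathbb Z/2$ quotient and no inversion action), so $\Hom(G^{\text{ab}},\bar{\mathbbm k}^{\times})\cong\mathbb Z/n$ rather than a group of order $2d_-$. The remaining work of identifying the extension and extracting the $\Pic\tilde C\langle\frac1{n_+}\hat p\rangle\oplus\mathbb Z/d_+$ structure is deferred to the proof of Theorem \ref{twisted Picard} and is not needed for this lemma.
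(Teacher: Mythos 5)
Your overall strategy matches the paper's: both run the five-term exact sequence of the Leray spectral sequence for the coarsening $\pi:\mathcal C\to C$, identify the stalk of $\mathbf R^1\pi_*\mathbb G_m$ at the node as $\mathbb Z/n$ via Corollary \ref{Meier stalk}, and reduce everything to injectivity of $\coh^2(C,\mathbb G_m)\to\coh^2(\mathcal C,\mathbb G_m)$. However, your justification of that injectivity step contains a genuine error: you claim that $\pi(p)$ is a smooth point of the coarse space $C$, and this is false for a twisted node. Since $G=\mu_n$ acts preserving the branches, $(x,y)\mapsto(\zeta_n x,\zeta_n^a y)$, the invariant subring of $\mathbbm k[x,y]/(xy)$ is generated by $x^n$ and $y^n$ and is again of the form $\mathbbm k[u,v]/(uv)$; so the coarsening at a twisted node is still a node, not a smooth point. (Your own parenthetical remark concedes this --- ``the coarsening of a node with cyclic branch-preserving stabilizer is again nodal'' --- and then asserts smoothness anyway; these cannot both hold.) Smoothness of $\pi(p)$ is special to the doubly-twisted case, where the branch-swapping $\mathbb Z/2$ folds the two branches together; that is why the argument of Lemma \ref{H2 injectivity} cites the proof of Theorem \ref{doubly-twisted Picard} and cannot be imported here verbatim.

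The paper closes this gap differently: it uses the standing hypothesis that the node is \emph{split}. The kernel of the restriction $\coh^2(C,\mathbb G_m)\to\coh^2(U,\mathbb G_m)$ consists of classes supported at the (split) node, and this kernel is identified with
$$
\ker\left[\Br\mathbbm k\rightarrow\bigoplus_{i=1}^2\Br\mathbbm k\right],
$$
where both components of the map are the identity, since the node and the two branch preimages are all $\mathbbm k$-points. This kernel is trivially zero, so the restriction to $U$ is injective, and the commutative triangle you drew then gives injectivity of the pullback to $\mathcal C$ exactly as you intended. So your reduction and the commutative-triangle step are fine; what is missing is the correct local input at the node --- splitness replacing the (false) smoothness claim.
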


\begin{proof}
The beginning of the sequence of low-degree terms
of the Leray spectral sequence for $\pi:\mathcal C\rightarrow C$ reads
\[
0\rightarrow\Pic C\rightarrow\Pic\mathcal C\rightarrow
\coh^0(C,\mathbf R^1\pi_*\mathbb G_m)\rightarrow \coh^2(C,\mathbb G_m)\rightarrow\coh^2(\mathcal C,\mathbb G_m).
\]
As in Section \ref{doubly-twisted section}, $\mathbf R^1\pi_*\mathbb G_m$
vanishes except at the node, where its stalk is the Picard group of the node, $\Pic B\mu_n=\mathbb Z/n$. Hence
\[
0\rightarrow\Pic C\rightarrow\Pic\mathcal C\rightarrow
\mathbb \Pic B\mu_n\rightarrow \coh^2(C,\mathbb G_m)\rightarrow\coh^2(\mathcal C,\mathbb G_m)
\]
is exact.

Therefore it suffices to show the injectivity of
$$
\coh^2(C,\mathbb G_m)\rightarrow\coh^2(\mathcal C,\mathbb G_m).
$$
Following Lemma \ref{H2 injectivity}, away from the node we have
$C\setminus\{\pi(p)\}=U=\mathcal C\setminus\{p\}$.
Consider the diagram
$$
\begin{tikzcd}
\coh^2(C,\mathbb G_m)\arrow[rd]\arrow[rr] & & \coh^2(\mathcal C,\mathbb G_m)\arrow[ld]\\
 & \coh^2(U,\mathbb G_m). &
\end{tikzcd}
$$
To show that $\coh^2(C,\mathbb G_m)\rightarrow\coh^2(\mathcal C,\mathbb G_m)$
is injective, it suffices to show that the restriction of
$\coh^2(C,\mathbb G_m)$ to $U$ is injective. However, since the node is split,
the kernel of this restriction is precisely
$$
\ker\left[\Br\mathbbm k\rightarrow\bigoplus_{i=1}^2\Br\mathbbm k\right],
$$
where the map into each direct summand is the identity. Therefore
the restriction and hence pullback is injective.
\end{proof}

Following Section \ref{doubly-twisted section}, let $Q$ be the quotient
$$
0\rightarrow\mathcal O_{\mathcal C}^{\times}\rightarrow\eta_*\mathcal O_{\tilde{\mathcal C}}^{\times}
\rightarrow Q\rightarrow0.
$$
We get the exact sequence
{\[
0\rightarrow\mathcal O_{\mathcal C}(\mathcal C)^\times \to \mathcal O_{\tilde {\mathcal C}}(\tilde {\mathcal C})^\times\to \coh^0(\mathcal C,Q)\rightarrow\Pic\mathcal C\rightarrow
\Pic\tilde{\mathcal C}\rightarrow\coh^1(\mathcal C,Q).
\]} 

\begin{proposition}\label{Q structure twist}
Let $\mathbbm k^{\times}_{\text{triv}}$ denote the sheaf on $BG$ associated to the $G$-module whose
underlying group is $\mathbbm k^\times$, where the action is trivial.
Then $Q\cong p_* \mathbbm k^\times_{\text{triv}}$.
\end{proposition}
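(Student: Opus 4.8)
The plan is to mirror the proof of Proposition \ref{Q structure} almost verbatim, since the two arguments differ only in the final determination of the $G$-action on the stalk. First I would unwind the definition of $Q$: by construction $Q$ is the sheafification of the presheaf
$$U\mapsto\frac{(\eta_*\mathbb G_m)(U)}{\mathbb G_m(U)}=\frac{\mathbb G_m(\tilde U)}{\mathbb G_m(U)},$$
so it is enough to understand this quotient \'{e}tale-locally around $p$, where the local structure $[\Spec R/G]$ reduces the question to ordinary curves.

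On a curve the quotient of units on the partial normalization by units downstairs is supported at the node and is, Zariski-locally, the group $(\mathbbm k^\times)^2/\mathbbm k^\times_{\text{diag}}$: a unit of $\mathcal O_{\mathcal C}^\times$ has a single leading coefficient at the node, a unit of $\eta_*\mathcal O_{\tilde{\mathcal C}}^\times$ has an independent leading coefficient on each of the two branches, and the map downstairs records the constant $c$ as the diagonal element $(c,c)$. Hence the stalk is $(\mathbbm k^\times\times\mathbbm k^\times)/\mathbbm k^\times_{\text{diag}}\cong\mathbbm k^\times$, and this assignment is exactly the pushforward of $\mathbbm k^\times$ from the residual gerbe $BG$ at the node, just as in Proposition \ref{Q structure}.

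It then remains to determine how $G$ acts on this $\mathbbm k^\times$, and this is the one place where the twisted and doubly-twisted cases genuinely diverge. For a twisted node $G=\mu_n$ preserves both branches, acting by $(x,y)\mapsto(\zeta_n x,\zeta_n^a y)$; this rescales the coordinates but fixes the residue field and therefore fixes the leading coefficient on each branch. Thus $G$ acts trivially on both copies of $\mathbbm k^\times$, hence trivially on the quotient, and---crucially---there is no branch-swapping element to induce inversion as in the doubly-twisted case. The resulting $G$-module is $\mathbbm k^\times$ with trivial action, giving $Q\cong p_*\mathbbm k^\times_{\text{triv}}$. Given the template of Proposition \ref{Q structure}, every step is routine; the only thing requiring genuine verification is this triviality of the action, which is immediate from the branch-preserving definition of a twisted node, so I anticipate no real obstacle.
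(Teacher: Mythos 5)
Your proposal is correct and follows the same route as the paper: the paper's proof is literally the one-line statement that the argument is identical to Proposition \ref{Q structure} with the action now trivial, and your expansion (stalk $(\mathbbm k^\times\times\mathbbm k^\times)/\mathbbm k^\times_{\text{diag}}$ at the node, identification with the pushforward from the residual gerbe, triviality of the action because $\mu_n$ is branch-preserving and fixes the residue field) is exactly the content of that proposition's proof specialized to the twisted case.
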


\begin{proof}
This is identical to Proposition \ref{Q structure}, except now the action of $G$ on
$\mathbbm k^{\times}$ is trivial.
\end{proof}

\begin{corollary}
The cohomology of $Q$ is $\coh^0(\mathcal C,Q)=\mathbbm k^{\times}$ and
$\coh^1(\mathcal C,Q)=\mathbb Z/n$.
\end{corollary}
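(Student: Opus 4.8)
The plan is to follow exactly the template of the corollary computing $\coh^\bullet(\mathcal C,Q)$ in the doubly-twisted section, now with the trivial action in place of inversion. By Proposition \ref{Q structure twist} we have $Q\cong p_*\mathbbm k^\times_{\text{triv}}$, where $p:BG\to\mathcal C$ is the inclusion of the residual gerbe at the node. First I would observe that $p$ is a closed immersion, so $p_*$ is exact on abelian étale sheaves; consequently it induces isomorphisms on cohomology, giving
$$
\coh^i(\mathcal C,Q)=\coh^i(\mathcal C,p_*\mathbbm k^\times_{\text{triv}})=\coh^i(BG,\mathbbm k^\times_{\text{triv}})
$$
for every $i\geq0$. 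This reduces the entire computation to cohomology on the classifying stack $BG$.

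Next I would invoke the standard identification of cohomology on $BG$ (for the finite group $G$) with group cohomology, $\coh^i(BG,\mathbbm k^\times_{\text{triv}})=\coh^i(G,\mathbbm k^\times_{\text{triv}})$, exactly as was used to obtain $\coh^0(\mathcal C,Q)=\mu_2$ in the doubly-twisted case. For $i=0$, the point is that the action of $G=\mu_n$ on $\mathbbm k^\times$ is now \emph{trivial} (this is the content of the subscript ``$\text{triv}$''), so every element of $\mathbbm k^\times$ is $G$-invariant and
$$
\coh^0(\mathcal C,Q)=\coh^0(G,\mathbbm k^\times_{\text{triv}})=(\mathbbm k^\times)^G=\mathbbm k^\times.
$$
For $i=1$ there is nothing further to evaluate: by the same chain of isomorphisms we get $\coh^1(\mathcal C,Q)=\coh^1(G,\mathbbm k^\times_{\text{triv}})$ directly, which is the asserted formula.

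The argument is short because the real work was done in Proposition \ref{Q structure twist}; the only points needing care are that $p$ is genuinely a closed immersion (so that $p_*$ is exact and cohomology-preserving in the étale topology) and the comparison $\coh^i(BG,M)=\coh^i(G,M)$, both of which are standard and were already relied upon in the corollary following Proposition \ref{Q structure}. The one conceptual contrast worth flagging is that, unlike the doubly-twisted situation where inversion cut $\coh^0$ down to $\mu_2$, here the trivial action leaves $\coh^0$ equal to all of $\mathbbm k^\times$, and $\coh^1(G,\mathbbm k^\times_{\text{triv}})=\Hom(\mu_n,\mathbbm k^\times)$ is nonzero; this nonvanishing of $\coh^1(\mathcal C,Q)$ is precisely what will later feed the stacky $\mathbb Z/d_+$ contribution in Theorem \ref{twisted Picard}, so I would leave $\coh^1$ unevaluated at this stage rather than collapsing it to $\mathbb Z/n$.
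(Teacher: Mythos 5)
Your proposal is correct and follows essentially the same route as the paper: both reduce via Proposition \ref{Q structure twist} and exactness of pushforward along the closed immersion $p:BG\to\mathcal C$ to group cohomology of $G$ with trivial coefficients, whence $\coh^0=\mathbbm k^\times$ and $\coh^1=\coh^1(G,\mathbbm k^\times_{\text{triv}})$. The only cosmetic difference is that the paper goes one step further and records $\coh^1(G,\mathbbm k^\times_{\text{triv}})=\Hom(\mu_n,\mathbbm k^\times)=\mathbb Z/n$, which you deliberately (and harmlessly) defer.
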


\begin{proof}
This is also identical to its Section \ref{doubly-twisted section} counterpart, now with trivial action.
Hence
$$
\coh^0(\mathcal C,Q)=\coh^0(G,\mathbbm k^{\times})=\mathbbm k^{\times}
$$
and
\[
\coh^1(\mathcal C,Q)=\coh^1(\mu_n,\mathbbm k^{\times})=\mathbb Z/n. \qedhere
\]
\end{proof}

The long exact sequence in cohomology then becomes the following.
\begin{corollary}\label{twisted Pic sequence}
If the node is nonseparating, our exact sequence is
\[
0\rightarrow\mathbbm k^{\times}\rightarrow\Pic\mathcal C\rightarrow\Pic\tilde{\mathcal C}\rightarrow
\mathbb Z/n.
\]
Otherwise, if the node is separating, the exact sequence is
\[
0\to\Pic\mathcal C\rightarrow\Pic\tilde{\mathcal C}\rightarrow
\mathbb Z/n.
\]
\end{corollary}

\begin{note}
When $n=1$, so that $\mathcal C=C$ and $\tilde{\mathcal C}=\tilde C$, this gives the well-known exact sequences for nodes with no stacky structure
\[
    0\to \mathbbm k^\times \to \Pic C\to \Pic \tilde C\to 0
\]
for nonseparating nodes and
\[
0\to \Pic C\to \Pic\tilde{C}\to 0
\]
for separating nodes.
\end{note}

\begin{proof}[Proof of Corollary \ref{twisted Pic sequence}]
    If the node is nonseparating, then $\mathcal C$ and $\tilde{\mathcal C}$ are geometrically connected, so $\mathcal O_{\mathcal C}(\mathcal C)^\times \to \mathcal O_{\tilde {\mathcal C}}(\tilde {\mathcal C})^\times$ is an isomorphism. This gives the desired exact sequence.

    If the node is separating, we have $\mathcal O_{\mathcal C}(\mathcal C)^\times=\mathbbm k^\times$, but $\mathcal O_{\tilde {\mathcal C}}(\tilde {\mathcal C})^\times=(\mathbbm k^{\times})^2$, and the morphism between them is the diagonal. Then, the beginning of the exact sequence looks like
    \[0\to \mathbbm k^\times\to (\mathbbm k^\times)^2\to \mathbbm k^{\times}.\]
    Here, the second map sends $(a,b)\to ab^{-1}$. Then this sequence is short exact, which gives our desired exact sequence.
\end{proof}

\begin{proposition}\label{twisted image}
    The image of $\Pic\mathcal C\to \Pic\tilde{\mathcal C}$ is given by
    \[\Pic\tilde C\left<\sqrt[n]{\mathcal O(p_1)}\otimes \sqrt[n]{\mathcal O(p_2)}^{\otimes a}\right>,\]
    where we view $\Pic\tilde C\left<\sqrt[n]{\mathcal O(p_1)}\otimes \sqrt[n]{\mathcal O(p_2)}^{\otimes a}\right>$ inside of 
$$
\Pic\tilde{\mathcal C}=\Pic\tilde C\left\langle \sqrt[n]{\mathcal O(p_1)}, \sqrt[n]{\mathcal O(p_2)}\right\rangle.
$$
\end{proposition}
\begin{proof}
We have a diagram
$$
    \begin{tikzcd}
        0\arrow[r] & \Pic\tilde C\arrow[r] & \Pic\tilde{\mathcal C}\arrow[r] & (\Pic B\mu_n)^2\arrow[r] & 0\\
        0\arrow[r] & \Pic C\arrow[r]\arrow[u] & \Pic\mathcal C\arrow[r]\arrow[u] & \Pic B\mu_n\arrow[r]\arrow[u] & 0
    \end{tikzcd}
$$

The top row is exact by \cite[Theorem 1.1]{Lop23} and the bottom row is exact by Lemma~\ref{twisted Leray}. By the \'{e}tale local description of $\mathcal C$ at $p$, the pullback of $1\in \mathbb Z/n=\Pic B\mu_n$ to
$(\Pic B\mu_2)^2\cong (\mathbb Z/n)^2$ is $(1,a)$.  

By Corollary~\ref{twisted Pic sequence}, we have that the left map in the diagram is surjective.
Choosing an arbitrary lift $L\in \Pic \mathcal C$ of $1\in \Pic B\mu_n$, a diagram chase shows that
we may alter $L$ by an element of $\Pic C$ to assume that $L$ pulls back to
$\sqrt[n]{\mathcal O(p_1)}\otimes \sqrt[n]{\mathcal O(p_2)}^{\otimes a}$.
Thus, the image of $\Pic\mathcal{C}$ in $\Pic\tilde{\mathcal{C}}$ contains
$\Pic\tilde C\left<\sqrt[n]{\mathcal O(p_1)}\otimes \sqrt[n]{\mathcal O(p_2)}^{\otimes a}\right>$.
As this group has index $n$ in $\Pic\tilde{\mathcal C}$, it must be the entirety of the image by
Corollary~\ref{twisted Pic sequence}.
\end{proof}

Thus, we have proved the following theorem.
\begin{theorem}\label{twisted Picard v1}
If $p$ is nonseparating, we have an exact sequence
$$
0\rightarrow\mathbbm k^{\times}\rightarrow\Pic\mathcal C\rightarrow \Pic\tilde C\left<\sqrt[n]{\mathcal O(p_1+ap_2)}\right>\rightarrow 0.
$$
If $p$ is separating, we have
\[\Pic\mathcal C=\Pic\tilde C\left<\sqrt[n]{\mathcal O(p_1+ap_2)}\right>.\]
\end{theorem}

Clearly the behavior of these sequences depends on many factors,
including $a$, $p_1$ and $p_2$, the structure of $C$ and $\tilde C$,
and even the base field $\mathbbm k$. We give here the most important special case.

\begin{corollary}\label{balanced twisted Picard}
Assume that $\mathbbm k$ is algebraically closed and that $\mathcal C$ has a
balanced stable node, that is, $a\equiv-1\mod n$. If $p$ is nonseparating, then
$$
\Pic\mathcal C\cong\mathbbm k^{\times}\oplus\Pic\tilde{C}\oplus\mathbb Z/n\cong\Pic C\oplus\mathbb Z/n,
$$
and if $p$ is separating
\[\Pic\mathcal C\cong \Pic\tilde C\oplus \mathbb Z/n= \Pic C\oplus \mathbb Z/n.\]
\end{corollary}
\begin{proof}
Because $\mathbbm k$ is algebraically closed, $\Pic^0(\tilde C)$ is divisible, so we can find some
$\mathscr L$ with $\mathscr L^{\otimes n}\cong \mathcal O(p_1-p_2)$. Because
$\Pic C\to \Pic \tilde C$ is surjective, there is some element of $\Pic C$ pulling back to
$\mathscr L$. Let $\mathscr M$ be this element pulled back to $\Pic \mathcal C$.

By Proposition~\ref{twisted image}, there is a line bundle $\mathcal N\in \Pic\mathcal C$
pulling back to
$$
\sqrt[n]{\mathcal O(p_1)}\otimes \sqrt[n]{\mathcal O(p_2)}^{\vee}\in \Pic\tilde{\mathcal C}.
$$
Then $(\mathcal M\otimes \mathcal N)^{\otimes n}$ pulls back to $\mathcal O_{\tilde{\mathcal C}}$,
and so by Corollary~\ref{twisted Pic sequence}, $(\mathcal M\otimes \mathcal N)^{\otimes n}$
is in the image of $\mathbbm k^\times \to \Pic\mathcal C$ if the node is nonseparating, and is $\mathcal O$ otherwise. Then, using that $\mathbbm k^\times$ is divisible for the nonseparating case, we can find $\mathcal P$ pulled back from $C$ so that $\mathcal P^{\otimes n}=(\mathcal M\otimes \mathcal N)^{\otimes n}$, so
\[
(\mathcal M\otimes \mathcal P^\vee \otimes \mathcal N)^{\otimes n}=\mathcal O_{\mathcal C}.
\]
And, because $\mathcal M\otimes \mathcal P^\vee$ is pulled back from
$\Pic C\to \Pic\mathcal C$, $\mathcal M\otimes \mathcal P^\vee \otimes \mathcal N$ lifts a generator of
$\Pic B\mu_n$, and hence the exact sequence in Lemma~\ref{twisted Leray} is split.
Thus, we have
\[
\Pic\mathcal C\cong \Pic C\oplus\mathbb Z/n.
\]
In the separating case, $\Pic C=\Pic\tilde C$, and we are done.
In the nonseparating case, because $\mathbbm k$ is algebraically closed, $\mathbbm k^\times$ is divisible,
so the exact sequence in Corollary~\ref{twisted Pic sequence} splits, giving
\[
\Pic\mathcal C\cong \Pic C\oplus\mathbb Z/n\cong \mathbbm k^\times \oplus \Pic\tilde C\oplus \mathbb Z/n.
\qedhere\]
\end{proof}

Finally, we note that a twisted node does not contribute to the Brauer group.
\begin{proposition}\label{H2 twisted}
Let $\mathcal C$ be a stacky curve over an algebraically closed field
with a twisted node. The contribution of the node to $\coh^2(\mathcal C,\mathbb G_m)$
is
$$\coh^2(G,\mathbbm k^{\times}_{\text{triv}})=0.
$$
\end{proposition}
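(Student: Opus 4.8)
The plan is to reduce the statement to the explicit cyclic group cohomology formula of Proposition \ref{group coho}. Since the node is twisted, the geometric stabilizer $G=\mu_n$ is cyclic of order $n$, and by the conventions of this section the action of $G$ on $\mathbbm k^{\times}$ is trivial; this triviality is exactly the content of Proposition \ref{Q structure twist}. Thus I am computing $\coh^2(G,\mathbbm k^{\times}_{\text{triv}})$ for a cyclic group acting trivially, which is precisely the situation covered by the ``in particular'' clause at the end of Proposition \ref{group coho}.

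First I would invoke that clause: for a cyclic group $G$ of order $n$ acting trivially on a module $M$, the even cohomology $\coh^k(G,M)$ with $k\geq 2$ even vanishes provided multiplication by $|G|=n$ is surjective on $M$. Here $M=\mathbbm k^{\times}$, written multiplicatively, so ``multiplication by $n$'' is the $n$-th power map $x\mapsto x^n$. The only point to verify is therefore the surjectivity of this map.

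This surjectivity is immediate from the standing hypothesis that $\mathbbm k$ is algebraically closed: every $a\in\mathbbm k^{\times}$ is a root of $t^n-a$, which splits over $\mathbbm k$, so $a$ admits an $n$-th root and the $n$-th power map is onto. Hence $\mathbbm k^{\times}$ is $n$-divisible and Proposition \ref{group coho} yields $\coh^2(G,\mathbbm k^{\times}_{\text{triv}})=0$.

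There is essentially no obstacle here, and the contrast with the doubly-twisted case (Proposition \ref{H2 doubly twisted}) is what makes the computation worth recording. In that setting $G$ was a non-cyclic extension, and the vanishing hinged on analyzing the differential $d_2^{1,1}$ in the Lyndon--Hochschild--Serre spectral sequence together with the extension class $m$. Here the cyclicity of $\mu_n$ combined with the triviality of the action collapses the entire problem to the single observation that $\mathbbm k^{\times}$ is divisible, so no spectral sequence input is needed.
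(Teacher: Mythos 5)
Your proof is correct and takes essentially the same route as the paper, which likewise just invokes Proposition \ref{group coho} (cyclic group, trivial action, vanishing even cohomology) with the divisibility of $\mathbbm k^{\times}$ over an algebraically closed field supplying the surjectivity hypothesis. One small attribution quibble: the triviality of the $G$-action on $\mathbbm k^{\times}$ here comes from Theorem \ref{Brauer group} (via Proposition \ref{Meier structure}), not from Proposition \ref{Q structure twist}, though this is immaterial since the statement itself already fixes the trivial action.
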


\begin{proof}
This follows from Proposition \ref{group coho}, since cyclic groups have vanishing second cohomology
with coefficients in $\mathbbm k^{\times}_{\text{triv}}$.
\end{proof}

\newpage
\bibliographystyle{alpha}

\end{document}